 \newtheorem{thm}{Theorem}[section]
 \newtheorem{lem}[thm]{Lemma}
 \theoremstyle{definition}
 \newtheorem{defn}[thm]{Definition}
 \newtheorem{rem}[thm]{Remark}
 \numberwithin{equation}{section}
\newtheorem{Lemma A.1}{Lemma A.1}
\theoremstyle{definition}
\theoremstyle{remark}
\newcommand{\Real}{\mathbb R}
\newcommand{\eps}{\epsilon}
\begin{document}
\title{Global Regularity to the Navier-Stokes Equations for A Class of Large Initial Data}
\author{Yukang Chen\footnotemark[1]\and Bin Han
\footnotemark[2]\and
Zhen Lei\footnotemark[3]}
\renewcommand{\thefootnote}{\fnsymbol{footnote}}

\footnotetext[1]{School of Mathematical Sciences, Fudan University,
Shanghai,  200433,  China.}

\footnotetext[2]{ Corresponding author and Email: hanbinxy@163.com. School of Mathematical Sciences, Fudan University,
Shanghai,  200433,  China. }

\footnotetext[3]{School of Mathematical Sciences and Shanghai Center for Mathematical Sciences, Fudan University,
Shanghai,  200433,  China. }

\date{}
\maketitle

\begin{abstract}
Consider the generalized Navier-Stokes equations on $ \Real^n$:
$$\partial_tu+u\cdot\nabla u + D^s u+\nabla P=0,\quad \mathrm{div}\;u=0.$$
For some appropriate number $s$, we prove that the Cauchy problem with initial data of the form
\begin{equation}\nonumber
u_0^\epsilon(x) = (v_0^h(x_\epsilon), \epsilon^{-1}v_0^n(x_\epsilon))^T,\quad x_\epsilon = (x_{h}, \epsilon x_n)^T,
\end{equation}
is globally well-posed for all small $\epsilon > 0$, provided that the initial velocity profile $v_0$ is analytic in $x_n$ and certain norm of $v_0$ is sufficiently small but independent of $\epsilon$. In particular, for $n\geq4$, our result is applicable to the n-dimensional classical Navier-Stokes equations.
\end{abstract}

\section{Introduction}
The Cauchy problem of the  incompressible Navier-Stokes equations on $\Real^n$ is described by the following system
\begin{equation}\label{e1.1}
\left\{
 \begin{array}{rlll}
 &\partial_tu+u\cdot\nabla u-\Delta u+\nabla P=0,
\ \ &\ x\in\Real^n,\ t>0,\\
  &\hbox{div}\, u=0, \ \ &\ x\in\Real^n,\ t>0,\\
  &u(0)=u_0,  \ \ &\ x\in \Real^n,
   \end{array}
  \right.
\end{equation}
where $u$  represents the velocity field and $P$ is the scalar pressure.

First of all, let us recall some known results on the small-data global regularity of the Navier-Stokes equations on $\Real^3$. In the seminal paper \cite{Le},  Leray proved that the 3D incompressible Navier-Stokes equations are globally well-posed if the initial data  $u_0$ is such that $\|u_0\|_{L^2}\|\nabla u_0\|_{L^2}$ is small enough. This quantity  is invariant under the natural scaling of the Navier-Stokes equations. Later on, many authors studied different scaling invariant spaces in which Navier-Stokes equations are well-posed at least for small initial data, which include but are not limited to
$$
\dot{H}^{\frac{1}{2}}(\Real^3)\hookrightarrow L^3(\Real^3)\hookrightarrow
\dot{B}^{-1+\frac{3}{p}}_{p,\infty}(\Real^3)\hookrightarrow BMO^{-1}(\Real^3),
$$
where $3<p<\infty$. The space $BMO^{-1}(\Real^3)$ is known to be the largest scaling invariant space
so that the Navier-Stokes equations (\ref{e1.1}) are globally well-posed under small initial data. The readers are referred to \cite{FK,Ka,CMP,KT} as references. We also mention that the work of Lei and Lin \cite{LL} was the first to quantify the smallness of the initial data to be 1 by introducing a new space $\mathcal{X}^{-1}$.

We remark that the norm in the above scaling invariant spaces are always
greater than the norm in the Besov space $\dot{B}^{-1}_{\infty,\infty}$ defined by
$$\|u\|_{\dot{B}^{-1}_{\infty,\infty}}\overset{\mathrm{def}}{=}\sup\limits_{t>0}t^{\frac{1}{2}}\|e^{t\Delta}u_0\|_{L^\infty}.$$
Bourgain and Pavlovic in \cite{Bour} showed that the cauchy problem of the 3D Navier-Stokes equations is ill-posed in the sense of norm inflation. Partially because of the result of Bourgain and Pavlovic, data with a large $\dot{B}^{-1}_{\infty, \infty}$ are usually called large data to the Navier-Stokes equations (for instance, see \cite{CG,PZ}).

 Towards this line of research, a well-oiled case is the family of initial data  which is slowly varying in vertical variable.
The initial velocity field $u_0^\eps$  is of the form
\begin{equation}\label{e1.2}
u_0^\eps(x) = ( v_0^{h}(x_\eps),\eps^{-1}v_0^{3}(x_\eps))^T,\quad x_\eps = (x_h, \epsilon x_3)^T,
\end{equation}
which allows slowly varying in the vertical variable $x_3$ when $\eps > 0$ is a small parameter.
This family of initial data are very interesting (as has been pointed out by \mbox{V.$\check{\text{S}}$ver$\acute{\text{a}}$k}, see the acknowledgement in \cite{CGP}) and considered by Chemin, Gallagher and Paicu in \cite{CGP}. They proved the global regularity of solutions to the Navier-Stokes equations when $v_0$ is analytic in $x_3$ and periodic in $x_h$, and certain norm of $v_0$ is sufficiently small but independent of $\eps > 0$. More precisely, they proved the following Theorem:
 \begin{thm}[Chemin-Gallagher-Paicu, Ann. Math. 2011]\label{t1.1}
 Let $\alpha$ be a positive number. There are two positive numbers $\eps_0$ and $\eta$ such that for any divergence free
 vector field $v_0$ satisfying
 $$\|e^{\alpha D_3 }v_0\|_{{B}^{\frac{7}{2}}_{2, 1}}\leq \eta,$$
 then,  for any positive $\eps$ smaller than $\eps_0$,  the initial data \eqref{e1.2}
generates a global smooth solution to (\ref{e1.1}) on $\mathbb{T}^2\times \Real.$
\end{thm}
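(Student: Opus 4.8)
\noindent\emph{Proof proposal.} The plan is to run the classical change of variables that turns this into a uniform‑in‑$\eps$ perturbation of a two–dimensional system, and then to propagate analyticity in $x_3$ by an energy method with a shrinking radius of analyticity. \textbf{Rescaling.} First I would look for the solution in the profile form $u^\eps(t,x)=(v^h(t,x_\eps),\eps^{-1}v^3(t,x_\eps))^T$, $x_\eps=(x_h,\eps x_3)$, with pressure $P^\eps(t,x)=p(t,x_\eps)$. Writing $y=\eps x_3$ and $v=(v^h,v^3)$, a direct computation turns \eqref{e1.1} into the anisotropically dissipative system
\begin{equation}\nonumber
\partial_t v^h+(v\cdot\nabla)v^h-\Delta_h v^h-\eps^2\partial_y^2 v^h+\nabla_h p=0,\qquad
\partial_t v^3+(v\cdot\nabla)v^3-\Delta_h v^3-\eps^2\partial_y^2 v^3+\eps^2\partial_y p=0,
\end{equation}
together with $\mathrm{div}_h v^h+\partial_y v^3=0$, where $\nabla=(\nabla_h,\partial_y)$ and $v\cdot\nabla=v^h\cdot\nabla_h+v^3\partial_y$. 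The structural point is that at $\eps=0$ this degenerates to a family, parametrized by the vertical variable $y$, of two–dimensional Navier--Stokes type equations for $v^h$, coupled through the transport term and through $\mathrm{div}_h v^h=-\partial_y v^3$ to a purely horizontal heat equation for $v^3$; this limit system is globally well posed by two–dimensional arguments. So it suffices to solve the $\eps$–system on $[0,\infty)$ with bounds independent of $\eps\in(0,\eps_0]$, the $\eps^2\partial_y^2$ and $\eps^2\partial_y p$ terms being treated as (small and, in the first case, favorable) perturbations.

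\textbf{Analytic framework.} Because the vertical dissipation is only $\eps^2\partial_y^2$ while the transport term $v^3\partial_y$ costs one vertical derivative, I would carry analyticity in $x_3$ with a time–dependent, \emph{decreasing} radius. Fix a large constant $\lambda$, set $\theta(t)=\int_0^t\norm{v(t')}_{\A}\,dt'$ for a suitable subcritical norm $\norm{\cdot}_{\A}$ adapted to the transport estimates, and define $v_\Phi(t)=e^{(\alpha-\lambda\theta(t))D_3}v(t)$. Differentiating in time then produces on the left–hand side the favorable damping term $-\lambda\dot\theta\,D_3 v_\Phi$, which supplies exactly the one vertical derivative of gain needed to absorb the worst nonlinear contributions, as long as $\lambda$ is large and $\theta(t)<\alpha/\lambda$. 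The space in which to close the estimates is the analytic counterpart of $B^{7/2}_{2,1}$ on $\mathbb{T}^2\times\Real$: the index $7/2$ leaves room above the Lipschitz threshold $5/2$ required by the transport and two–component product estimates in dimension three, and the $\ell^1$–summation gives the Besov algebra property and the paraproduct/commutator estimates. The initial datum matches up: $\norm{v_\Phi(0)}_{B^{7/2}_{2,1}}=\norm{e^{\alpha D_3}v_0}_{B^{7/2}_{2,1}}\le\eta$.

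\textbf{Uniform estimates and continuation.} I would then derive, for $v_\Phi$ in $B^{7/2}_{2,1}$, a closed differential inequality, uniformly for $\eps\in(0,1]$, valid as long as the radius $\alpha-\lambda\theta(t)$ is positive. The pressure solves $-(\Delta_h+\eps^2\partial_y^2)p=\sum_{i,j}\partial_i\partial_j(v^iv^j)$, and here is where the torus $\mathbb{T}^2$ is used twice: horizontal frequencies are bounded away from zero, so the multipliers built from $(\Delta_h+\eps^2\partial_y^2)^{-1}$ are bounded \emph{uniformly in} $\eps$ (making the pressure estimate $\eps$–free), and $\Delta_h$ has a spectral gap on horizontal–mean–zero functions, giving the time–integrability of the dissipation norm that keeps $\theta(t)$ finite. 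Combining the dissipation, the gain term, and the analytic Besov product laws, one obtains
\begin{equation}\nonumber
\frac{d}{dt}\norm{v_\Phi}_{B^{7/2}_{2,1}}+c\,\mathcal{D}\ \lesssim\ \norm{v_\Phi}_{B^{7/2}_{2,1}}\,\mathcal{D},\qquad
\mathcal{D}:=\norm{\nabla_h v_\Phi}_{B^{7/2}_{2,1}}+\lambda\dot\theta\,\norm{D_3 v_\Phi}_{B^{7/2}_{2,1}} ,
\end{equation}
whence a standard bootstrap: if $\eta$ is small enough (and $\eps\le\eps_0$), the right–hand side stays absorbed by the left, $\theta(t)$ never reaches $\alpha/\lambda$, and $\norm{v_\Phi}_{B^{7/2}_{2,1}}$ stays small for all $t\ge0$. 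This yields a global smooth solution of the rescaled system; undoing the scaling gives the global smooth solution of \eqref{e1.1} for the data \eqref{e1.2}, for every $\eps\le\eps_0$.

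\textbf{Main obstacle.} The heart of the matter is the last step, and within it the control of the vertical component $v^3$, which in the limit $\eps=0$ carries no vertical dissipation at all: every vertical derivative landing on $v^3$ — in $(v\cdot\nabla)v^3$, in the pressure nonlinearity, and in the constraint used to eliminate $v^3$ in favor of $v^h$ — must be charged to the analytic weight, and the zero horizontal Fourier mode of $v^h$ (which solves a derivative transport equation with no dissipation) must be handled by the same weight. Choosing $\lambda$ large enough to dominate all of these simultaneously while keeping every constant independent of $\eps$ is the delicate point, and it is exactly what dictates the precise exponent $7/2$ and the precise form of $\theta(t)$.
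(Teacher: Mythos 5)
The paper does not actually prove Theorem~\ref{t1.1}; it quotes it from Chemin--Gallagher--Paicu and then proves the generalization (Theorem~\ref{t1.2}) in $L^p$-type anisotropic Besov spaces on $\Real^n$. That said, your sketch is an accurate reconstruction of the CGP strategy, and it matches the outline the paper itself gives in the ``Main ideas of the Proof'' section: rescaling to the $\eps$--system, propagating analyticity in the vertical variable with a shrinking radius $\alpha-\lambda\theta(t)$, closing in an $\ell^1$-summed Besov space, and using the periodic horizontal domain to keep low horizontal frequencies away from zero (for the pressure multipliers and the spectral gap). Two small refinements worth flagging: in CGP (and in the paper's $\theta$ from \eqref{e3.3}) the quantity $\dot\theta$ is tied specifically to the norm of the \emph{vertical} component $v^3$ (respectively $v^n$), since it is $v^3\partial_3$ that loses a vertical derivative — your more generous choice $\theta(t)=\int_0^t\|v\|_{\mathcal A}$ would still provide the needed damping but is wasteful and makes the smallness bookkeeping for $\theta<\alpha/\lambda$ less transparent. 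And the heuristic that the $\eps=0$ limit ``is globally well posed by two--dimensional arguments'' is motivational only: the actual small-data estimates do not invoke 2D global regularity as a black box, they control all components through the analytic weight and the Besov product laws.
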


The notation ${B}^{\frac{7}{2}}_{2, 1}$ in the above Theorem denotes the usual inhomogeneous Besov space. The significance of the result lies in that the global regularity of the 3D incompressible Navier-Stokes equations in \cite{CGP} only requires \textit{very little smallness} imposed on the initial data. It is clear that the $\dot{B}^{-1}_{\infty, \infty}$ norm of $u_0^\eps$ can tend to infinity as $\epsilon \to 0$. Let us first focus on the periodic constraint imposed on the initial data in Theorem \ref{t1.1}.

As has been pointed out by Chemin, Gallagher and Paicu, the reason why the horizontal variable of the initial data in \cite{CGP} is restricted to a torus is to be able to deal with very low horizontal frequencies. In the proof of Theorem \ref{t1.1} in \cite{CGP},
functions with zero horizontal average are treated differently to the others, and it
is important that no small horizontal frequencies appear other than zero. Later on, many efforts are made towards removing the periodic constraint of $v_0$ on the horizontal variables.  For instance, see \cite{CG,Ha,GHZ,PZ} and so on.  We will review those results a little bit later.

In this paper, we consider the Cauchy problem of the following generalized Navier-Stokes equations on $\Real^n$:
\begin{equation}\label{e1.3}
\left\{
 \begin{array}{rlll}
 &\partial_tu+u\cdot\nabla u+D^su+\nabla P=0,
\ \ &\ x\in\Real^n,\ t>0,\\
  &\hbox{div}\, u=0, \ \ &\ x\in\Real^n,\ t>0,\\
  &u(0)=u^\eps_0,  \ \ &\ x\in \Real^n,
   \end{array}
  \right.
\end{equation}
where $D=\sqrt{-\Delta}$.
The initial velocity field $u_0^\eps$  is of the form
\begin{equation}\label{e1.4}
u_0^\eps(x) = ( v_0^{h}(x_\eps),\eps^{-1}v_0^{n}(x_\eps))^T,\quad x_\eps = (x_h, \epsilon x_n)^T.
\end{equation}
The horizontal variable $x_h=(x_1,x_2,\cdots,x_{n-1})$.

Our main result is the following theorem which  generalizes the theorem of Chemin, Gallagher and Paicu  to the whole space for the generalized Navier-Stokes equations with some appropriate number $s$. Definition of notations will be given in Section 2.
\begin{thm}\label{t1.2}
Let $\alpha$, $\epsilon_0$, $p$ and $s$ be four positive constants and $(p,s)$ satisfy $1\leq p < n-1, 1 \leq s < \min(n-1,\frac{2(n-1)}{p})$. There exists a positive constant $\eta$
such that  for any $0 < \eps < \eps_0$ and any divergence free vector field $v_0$ satisfying
 \begin{equation}\label{e1.5}
\|e^{\alpha D_n }v_0\|_{\dot{B}^{\frac{n-1}{p}-s,\frac{1}{2}}_{p,1}\cap \dot{B}^{\frac{n-1}{p}+1-s,\frac{1}{2}}_{p,1}}\leq \eta,
\end{equation}
then the generalized Navier-Stokes equations (\ref{e1.3}) with  initial data
\eqref{e1.4}
generate a global smooth solution on $\Real^n$.
\end{thm}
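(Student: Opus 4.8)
The plan is to remove the slow vertical variation by a rescaling, which turns \eqref{e1.3}--\eqref{e1.4} into a uniformly-in-$\eps$ perturbation of a generalized Navier--Stokes system in dimension $n-1$, and then to propagate analyticity of the rescaled solution in the vertical variable by a Chemin--Gallagher--Paicu / Paicu--Zhang type argument carried out in the anisotropic Besov spaces of \eqref{e1.5}. Concretely, setting $y=x_\eps=(x_h,\eps x_n)$ and looking for a solution of \eqref{e1.3} of the form $u^\eps(t,x)=\big(v^h(t,y),\eps^{-1}v^n(t,y)\big)^{T}$, $P^\eps(t,x)=q(t,y)$, one checks that $v=(v^h,v^n)$ must solve on $\Real^n$ the system $\partial_t v+v\cdot\nabla v+D_\eps^s v+(\nabla_h q,\ \eps^2\partial_n q)^{T}=0$, $\mathrm{div}\,v=0$, $v|_{t=0}=v_0$, where $D_\eps=(-\Delta_h-\eps^2\partial_n^2)^{1/2}$, $\nabla_h$ is the horizontal gradient, and $q=-(-\Delta_h-\eps^2\partial_n^2)^{-1}\mathrm{div}(v\cdot\nabla v)$. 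The datum $v_0$ is now independent of $\eps$; moreover $D_\eps^s\ge D_h^s$ with $D_h=(-\Delta_h)^{1/2}$, uniformly in $\eps$, so the full horizontal dissipation of an $(n-1)$-dimensional generalized Navier--Stokes flow is always available, while every genuinely $\eps$-dependent or purely vertical contribution either carries a prefactor $\eps$ or $\eps^2$, or can be shifted onto $D_\eps^s$ or, via $\mathrm{div}\,v=0$, onto horizontal derivatives by the substitution $\partial_n v^n=-\mathrm{div}_h v^h$.

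Next I would introduce the Fourier multiplier $e^{\Phi(t,D_n)}$ with $\Phi(t,\xi_n)=(\alpha-\lambda\theta(t))|\xi_n|$, $\lambda$ a large absolute constant and $\theta$ nondecreasing with $\theta(0)=0$, determined a posteriori from $v_\Phi:=e^{\Phi(t,D_n)}v$ by $\dot\theta(t)=\|v_\Phi(t)\|_{\dot B^{\frac{n-1}{p}+1,\frac12}_{p,1}}$, the top dissipation norm, which is automatically in $L^1_t$ by the estimate below. The operator $e^{\Phi}$ commutes with $D_\eps^s$ and with horizontal frequency localizations, and the subadditivity $\Phi(t,\xi_n)\le\Phi(t,\xi_n-\eta_n)+\Phi(t,\eta_n)$ lets one dominate $e^{\Phi}(fg)$ by a vertical-frequency convolution of $|f_\Phi|$ and $|g_\Phi|$. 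Running an anisotropic Littlewood--Paley / Chemin--Lerner estimate on the equation for $v_\Phi$ \emph{simultaneously} in $\dot B^{\frac{n-1}{p}-s,\frac12}_{p,1}$ and $\dot B^{\frac{n-1}{p}+1-s,\frac12}_{p,1}$, the dissipation yields the smoothing gain $\|v_\Phi\|_{\widetilde L^1_t(\dot B^{\frac{n-1}{p},\frac12}_{p,1}\cap\dot B^{\frac{n-1}{p}+1,\frac12}_{p,1})}$, and differentiating $\Phi$ in time yields the good damping term $\lambda\int_0^t\dot\theta(\tau)\,\|v_\Phi(\tau)\|_{\dot B^{\frac{n-1}{p}-s,\frac32}_{p,1}\cap\dot B^{\frac{n-1}{p}+1-s,\frac32}_{p,1}}\,d\tau$ (one vertical derivative above the norms being estimated). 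After a Bony decomposition in both variables, the worst piece of the convection term, the vertical transport $v^n\partial_n v$, is controlled by $\|v^n_\Phi\|_{L^1_tL^\infty_x}$ — bounded through the top smoothing norm by $\int_0^t\dot\theta$ — times precisely the above norm with vertical index $\tfrac32$, hence absorbed into the damping term; the horizontal-transport and remainder pieces fall into the smoothing gain; and the pressure gradients are estimated the same way after using $\mathrm{div}(v\cdot\nabla v)=\sum_{i,j}\partial_iv^j\partial_jv^i$, the substitution $\partial_nv^n=-\mathrm{div}_hv^h$, and the elementary symbol bounds for $\eps^2\partial_n(-\Delta_h-\eps^2\partial_n^2)^{-1}\partial_i\partial_j$, the explicitly $\eps$-dependent pressure pieces gaining powers of $\eps$. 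All the nonlinear contributions are thus bounded by the square of the solution's size.

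Collecting these, with $E(t)$ the sum of $\|v_\Phi\|_{\widetilde L^\infty_t(\dot B^{\frac{n-1}{p}-s,\frac12}_{p,1}\cap\dot B^{\frac{n-1}{p}+1-s,\frac12}_{p,1})}$, the $\widetilde L^1_t$ smoothing norm above, and the $\lambda\dot\theta$ damping term, one obtains, as long as $\alpha-\lambda\theta(t)>0$ and $0<\eps<\eps_0$, an inequality $E(t)\le C\|e^{\alpha D_n}v_0\|_{\dot B^{\frac{n-1}{p}-s,\frac12}_{p,1}\cap\dot B^{\frac{n-1}{p}+1-s,\frac12}_{p,1}}+C\,E(t)^2$. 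A standard continuity/bootstrap argument then shows that if $\eta$ in \eqref{e1.5} is small enough, $E(t)\le 2C\eta$ persists for all $t>0$; in particular $\lambda\theta(\infty)\le\lambda\|v_\Phi\|_{L^1_t\dot B^{\frac{n-1}{p}+1,\frac12}_{p,1}}\le C\lambda\eta<\alpha$, so the radius of analyticity stays positive and $v$ is a global solution analytic in $x_n$. Since $s\ge1$, the horizontal dissipation $D_h^s$ gives parabolic smoothing in $x_h$, and bootstrapping $v_\Phi$ into $\dot B^{\frac{n-1}{p}+m,\frac12}_{p,1}$ for every $m\ge0$ yields $v\in C^\infty$; undoing the change of variables then produces the desired global smooth solution of \eqref{e1.3}--\eqref{e1.4}, with $P^\eps(t,x)=q(t,x_\eps)$.

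The step I expect to be the main obstacle is exactly the one that forced the torus in Theorem \ref{t1.1}: the absence of a spectral gap at horizontal frequency zero, which makes both the pressure multiplier $\eps^2\partial_n(-\Delta_h-\eps^2\partial_n^2)^{-1}$ and the horizontal transport singular or uncontrolled at low horizontal frequencies, where $D_h^s$ supplies nothing. In place of periodicity, the work lies in making the anisotropic scale $\dot B^{\frac{n-1}{p}+\cdot,\frac12}_{p,1}$ — whose vertical index $\tfrac12$ is tuned so that the dissipation spaces still embed into $L^1_tL^\infty_x$, precisely what the vertical transport $v^n\partial_n$ needs — carry \emph{all} of the paraproduct and $\mathrm{div}^{-1}$ estimates simultaneously and uniformly in $\eps$; the hypotheses $1\le p<n-1$ and $1\le s<\min(n-1,\tfrac{2(n-1)}{p})$ are the sharp thresholds ensuring that $\tfrac{n-1}{p}-s$ stays above the critical exponent for anisotropic products on $\Real^{n-1}$ and that the vertical convolution losses can be distributed as claimed.
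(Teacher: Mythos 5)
Your proposal follows the same overall strategy as the paper: rescale to \eqref{e1.6}, introduce the phase $e^{\Phi(t,D_n)}$ with $\Phi=(\alpha-\lambda\theta(t))|\xi_n|$, work in the anisotropic scale $\dot B^{\sigma,\frac12}_{p,1}$, absorb the $\partial_n$-loss from $v^n\partial_n v^h$ into the damping that $\dot\theta$ produces, and close by a bootstrap. But your choice $\dot\theta(t)=\|v_\Phi(t)\|_{\dot B^{\frac{n-1}{p}+1,\frac12}_{p,1}}$ is off by one horizontal derivative, and this is not cosmetic. You use it to assert $\|v^n_\Phi\|_{L^\infty_x}\lesssim\dot\theta$, but the homogeneous space $\dot B^{\frac{n-1}{p}+1,\frac12}_{p,1}$ does \emph{not} embed in $L^\infty$: Bernstein gives $\|\Delta_{k,j}f\|_{L^\infty}\lesssim 2^{k\frac{n-1}{p}}2^{j/2}\|\Delta_{k,j}f\|_{L^p_h L^2_v}$, so only the critical index $\frac{n-1}{p}$ sums; at low horizontal frequencies $2^{k\frac{n-1}{p}}=2^{k(\frac{n-1}{p}+1)}\cdot 2^{-k}$ diverges. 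Moreover the product law (Lemma~\ref{p2.5}) requires both horizontal indices $\le\frac{n-1}{p}$, so a factor sitting in $\dot B^{\frac{n-1}{p}+1,\frac12}_{p,1}$ cannot be used in the bilinear estimate for $v^n\partial_n v^h$ at all. The paper's choice $\dot\theta(t)=\|v^n_\Phi(t)\|_{\dot B^{\frac{n-1}{p},\frac12}_{p,1}}$, the \emph{intermediate} dissipation norm of the vertical component only, is precisely what makes the absorption and the embedding work; the paper then puts the resulting quantity $\int_0^t\|v^n_\Phi\|_{\dot B^{\frac{n-1}{p},\frac12}_{p,1}}\|\partial_n v^h_\Phi\|_{\dot B^{\frac{n-1}{p}+1-s,\frac12}_{p,1}}\,d\tau$ explicitly into $\Psi(t)$ and estimates it separately by the Fubini trick.

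The second and larger gap is in closing the lower-order norms that control $\theta(t)$. You include $\|v_\Phi\|_{\widetilde L^\infty_t\dot B^{\frac{n-1}{p}-s,\frac12}_{p,1}}$ of the \emph{full} field (hence of $v^h$) without any $\eps$ weight. But the pressure contribution $\nabla_h q^3=-2(-\Delta_\eps)^{-1}\nabla_h\partial_n(v^n\mathrm{div}_h v^h)$ to the $v^h$ equation has the anisotropic symbol $\xi_h\xi_n/(|\xi_h|^2+\eps^2\xi_n^2)$, which is only bounded by $1/\eps$ (uniform-in-$\eps$ bounds cost an extra factor $2^j 2^{-k}$, and the $2^j$ cannot be absorbed here because the plain $L^1_t$ norm of the pressure carries no $\dot\theta$ factor that the Fubini trick needs). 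The paper resolves this in Section 4 precisely by estimating $X(t)=\eps\|v^h_\Phi\|_{\widetilde L^\infty_t\dot B^{\frac{n-1}{p}-s,\frac12}_{p,1}}+\eps\|v^h_\Phi\|_{L^1_t\dot B^{\frac{n-1}{p},\frac12}_{p,1}}$ with the compensating factor $\eps$, using the vertical part $\eps^s 2^{js}$ of the dissipation to pay for the extra $\partial_n$ and noting that the pressure $\eps^2\partial_n q^1$ in the $v^n$ equation produces exactly this $\eps$-weighted $v^h$ norm. Without this device, neither the $v^n$-estimate at the lower order (Lemma~\ref{l3.1}) nor the bound on $\theta(t)$ closes. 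Your paragraph on ``the explicitly $\eps$-dependent pressure pieces gaining powers of $\eps$'' addresses only $\eps^2\partial_n q$, not the singular $\nabla_h q$ piece, which is where the actual difficulty lies.
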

\noindent
\begin{rem}

1) When $n\geq4$, one can find that the classical Navier-Stokes equations on $\Real^n$  satisfy the assumption $1\leq s=2 < \min(n-1,\frac{2(n-1)}{p})$ for $1\leq p <n-1$.  Then according to Theorem \ref{t1.2}, the n-dimensional incompressible Navier-Stokes equations with initial data (\ref{e1.4}) have a global smooth solution in the whole space case.\\
2) In the case $n=3$, we require that $1\leq s<2$. The main obstacle when $s=2$ is that we can not get the product law in $\dot{B}^{\frac{2}{p}-2,\frac{1}{2}}_{p,1}(\Real^3).$  This anisotropic Besov space is induced by the $a \ priori$ estimate of  anisotropic pressure $(\nabla_hq, \eps^2\partial_3q)$ (see equation (\ref{e1.6}) and Step 5 for details). From this point of view, the global well-posedness of 3D incompressible  Navier-Stokes equations with initial data (\ref{e1.4}) on $\mathbb{R}^3$ is still unclear, even though the higher dimensional cases are settled down.\\
3) In the present paper, we establish the global solution in the $L^p$-type Besov space, in which the bilinear estimate of the solution can not be derived by  the classical $L^2$ energy method. Particularly, one can obtain the $L^1$ estimate in time of the solution by introducing the new quantity
 $$\int_0^t \|v^n\|_{\dot{B}_{p,1}^{\frac{n-1}{p},\frac{1}{2}}}\|\partial_nv^h\|_{\dot{B}_{p,1}^{\frac{n-1}{p}+1-s,\frac{1}{2}}}d\tau$$
in the $a\ priori$ estimate. We also mention that in \cite{CGP,PZ,PZ2}, authors did not get the  $L^1$-time estimate of the solution.
\end{rem}

Now we mention that many authors make efforts to remove the periodic restriction on horizontal variable.
J. Chemin and I. Gallagher  considered the well-prepared initial data in \cite{CG}. They proved the global well-posedness of (\ref{e1.1}) when $u_0^\eps$ is of the form
$$u_0^\eps{=}(v^h_0+\eps w^h_0, w_0^3)(x_h,\eps x_3).$$
Later, G. Gui, J. Huang, and P. Zhang in  \cite{GHZ} generalized this result to the density dependent Navier-Stokes
equations with the same initial velocity.  Recently, B. Han in \cite{Ha} considered the global regularity of (\ref{e1.1}) if $u_0^\eps$ satisfies the form of
$${u}^\eps_0(x){=}\eps^\delta(v_0^{h}( x_h,\eps x_3),\eps^{-1}v_0^{3}( x_h,\eps x_3))$$
for any $0<\delta<1$,
then $u^\eps_0$ generates a global solution of (\ref{e1.1}) on $\Real^3.$ The case $\delta=\frac{1}{2}$ and $\delta\in(0,\frac{1}{2})$ were considered by
M. Paicu and Z. Zhang in \cite{PZ,PZ2} if  the initial data is allowed to be in Gevery class. All of the initial data is large in $\dot{B}^{-1}_{\infty,\infty}$, but still generates a global
solution.

{\bf{Main ideas of the Proof.}} We will prove our main result by constructing the bilinear  estimate  (independent of $\eps$). Our strategy can be stated as follows.

{\bf{Step 1.}} Rescaled system and simplification.

As in \cite{CGP}, we define
$$u^\eps(t,x)=( v^h(t,x_\eps), \eps^{-1}v^n(t,x_\eps))^T,\quad P^\eps(t,x)=q(t,x_\eps).$$
Denote
$$\Delta_\eps=\Delta_h+\eps^2\partial_n^2, \  \Delta_h=\partial_1^2+\cdots+\partial_{n-1}^2,\ D_\eps=\sqrt{-\Delta_\eps}.$$
Using the Navier-Stokes equations \eqref{e1.1}, it is easy to derive the equations governing the rescaled variables  $v$ and $q$ (they are still depending on $\eps$):
\begin{equation}\label{e1.6}
\left\{
 \begin{array}{rlll}
 &\partial_tv^h+ v\cdot\nabla v^h+D_\eps^s v^h+\nabla_h q=0,\\
 &\partial_tv^n+v\cdot\nabla v^n+D_\eps^s v^n+\eps^2\partial_n q=0,\\
 & \hbox{div}\,v=0,\quad v(0)=v_0(x),
   \end{array}
  \right.
\end{equation}
where $v^h=(v^1,\cdots,v^{n-1})$.
The rescaled pressure $q$ can be recovered  by the divergence
free condition as
$$-\Delta_\eps q= \sum\limits_{i,j}\partial_i\partial_j(v^iv^j).$$
The global regularity of solutions to system \eqref{e1.4} for small initial data $v_0$ will be presented in Section 3 and 4 for any positive $\eps$. But to best illustrate our ideas, let us here focus on the case of $\eps = 0$.
Formally, by taking $\eps=0$ in system \eqref{e1.4},
we have the following limiting system:
 \begin{equation}\label{elimit}
\left\{
 \begin{array}{rlll}
 &\partial_tv^h+ v\cdot\nabla v^h+D_h^s v^h+\nabla_h q=0,\\
 &\partial_tv^n+v\cdot\nabla v^n+D_h^s v^n=0,\\
  &\hbox{div}\,v=0,\quad v(0)=v_0(x),
   \end{array}
  \right.
\end{equation}
where $D_h=\sqrt{-\Delta_h}$.
The pressure $q$ in (\ref{elimit}) is given by
$$-\Delta_h q= \sum\limits_{i,j}\partial_i\partial_j(v^iv^j).$$

{\bf{Step 2.}} Set-up of the $a\ priori$ estimate.

 Observing that in the rescaled system (\ref{elimit}), the  viscosity is absent in the vertical direction. To make the full use of smoothing effect from operator $\partial_t +D_h^s$, particularly in low  frequency parts, we will
apply the tool of anisotropic homogeneous Besov spaces.
The goal is to derive certain $a\ priori$ estimate of the form:
 $$\Psi(t)\lesssim \Psi(0)+\Psi(t)^2.$$
Note that pressure term doesn't explicitly appear in the equation of $v^n$ of the limiting system (\ref{elimit}), which makes the estimate for $v^n$ easier.
So here let us just focus on the equation of $v^h$.  Naturally, we define
 $$\Psi(t)= \|v^h\|_{\widetilde{L}^\infty_t(\dot{B}_{p,1}^{\frac{n-1}{p}+1-s,\frac{1}{2}})}+\|v^h\|_{L^1_t(\dot{B}_{p,1}^{\frac{n-1}{p}+1,\frac{1}{2}})}+\cdots.$$
 At this step,  we assume that the initial data $v_0^h$ belongs to $\dot{B}_{p,1}^{\frac{n-1}{p}+1-s,\frac{1}{2}}$.
 This ensures that  $\Psi(t)$
is a critical quantity with respect to the natural scaling of the generalized
Navier-Stokes equations.
By  Duhamel's principle, we can write the integral equation of $v^h$ by
$$v^h = e^{-tD_h^s}v_0^h - \int_0^te^{-(t - \tau)D_h^s}(v^h\cdot\nabla_h v^h + v^n\partial_nv^h + \nabla_hq)d\tau.$$
According to the estimates of heat equation,
one can formally has
$$\Psi(t)\lesssim \Psi(0)+\|v^h\cdot\nabla_h v^h\|_{L^1_t(\dot{B}_{p,1}^{\frac{n-1}{p}+1-s,\frac{1}{2}})} +\| v^n\partial_nv^h\|_{L^1_t(\dot{B}_{p,1}^{\frac{n-1}{p}+1-s,\frac{1}{2}})} + \|\nabla_hq\|_{L^1_t(\dot{B}_{p,1}^{\frac{n-1}{p}+1-s,\frac{1}{2}})}.$$
It will be shown that
$$\aligned
\|v^h\cdot\nabla_h v^h\|_{L^1_t(\dot{B}_{p,1}^{\frac{n-1}{p}+1-s,\frac{1}{2}})}\lesssim \|v^h\|_{\widetilde{L}^\infty_t(\dot{B}_{p,1}^{\frac{n-1}{p}+1-s,\frac{1}{2}})}\|\nabla_h v^h\|_{L^1_t(\dot{B}_{p,1}^{\frac{n-1}{p},\frac{1}{2}})} \lesssim \Psi(t)^2.
\endaligned$$

{\bf{Step 3.}} Derivative loss: input the estimate of $\partial_nv^h$.

For the quantity $\Psi(t)$, we need to prove the  bilinear estimate in the following form:
$$\Psi(t)\lesssim \Psi(0)+\Psi(t)^2+ \|v^n\partial_nv^h\|_{ L^1_t(\dot{B}_{p,1}^{\frac{n-1}{p}+1-s,\frac{1}{2}})}+\cdots.$$
 Certainly, there is $\partial_n$-derivative loss! By the product law in anisotropic Besov spaces (Lemma \ref{p2.5}), the strategy  to bound the last term is
 $$\|v^n\partial_nv^h\|_{L^1_t(\dot{B}_{p,1}^{\frac{n-1}{p}+1-s,\frac{1}{2}})}\lesssim  \int_0^t \|v^n\|_{\dot{B}_{p,1}^{\frac{n-1}{p},\frac{1}{2}}}\|\partial_nv^h\|_{\dot{B}_{p,1}^{\frac{n-1}{p}+1-s,\frac{1}{2}}}d\tau,$$
and then we should add the new quantity
 $$\int_0^t \|v^n\|_{\dot{B}_{p,1}^{\frac{n-1}{p},\frac{1}{2}}}\|\partial_nv^h\|_{\dot{B}_{p,1}^{\frac{n-1}{p}+1-s,\frac{1}{2}}}d\tau$$
 in the definition of $\Psi(t).$
 We find that $\|\partial_nv^h\|_{\dot{B}_{p,1}^{\frac{n-1}{p}+1-s,\frac{1}{2}}}$ is the hardest term to estimate. Since we note  that  by  Duhamel's principle,
  $$\partial_nv^h = e^{-tD_h^s}\partial_nv_0^h - \int_0^te^{-(t - \tau)D_h^s}\partial_n(v^n\partial_nv^h)d\tau+\cdots.$$
 There is still  $\partial_n$-derivative loss in the $a\ priori$ estimates!

{\bf{Step 4.}} Recover the derivative loss: analyticity in $x_n$.

Motivated by Chemin-Gallagher-Paicu \cite{CGP}, we add an exponential
weight $e^{\Phi(t,  D_n )}$ with
$$\Phi(t, |\xi_n|)=(\alpha-\lambda\theta(t))|\xi_n|.$$
Here $\theta(t)$ is defined by
$$\theta(t)=\int_0^t\|v_\Phi^n\|_{\dot{B}_{p,1}^{\frac{n-1}{p},\frac{1}{2}}}d\tau,$$
which will be shown to be small to ensure that $\Phi(t, |\xi_n|)$ satisfies the subadditivity. Denoting $f_\Phi=e^{\Phi(t,  D_n )}f$, we then have
 $$\partial_nv_\Phi^h = e^{-tD_h^s}e^{\Phi(t,  D_n )}\partial_nv_0^h - \int_0^te^{-(t - \tau)D_h^s}e^{-\lambda\int_\tau^t\dot{\theta}(t')dt' D_n }\partial_n(v^n\partial_nv^h)_\Phi d\tau+\cdots.$$
 Hence, we can recover the derivative loss by
 $$\aligned
 &\int_0^t \|v_\Phi^n\|_{\dot{B}_{p,1}^{\frac{n-1}{p},\frac{1}{2}}} \|\partial_nv_\Phi^h\|_{\dot{B}_{p,1}^{\frac{n-1}{p}+1-s,\frac{1}{2}}}d\tau\\
 &\lesssim \sum\limits_{k,j}2^{k(\frac{n-1}{p}+1-s)}2^{\frac{1}{2}j}\int_0^t\dot{ \theta}(\tau)\int_0^{\tau}e^{-c\lambda\int_{t'}^\tau\dot{\theta}(t'')dt''2^j}2^j\|\Delta_{k,j}(v^n\partial_nv^h)_\Phi\|_{L^p_h(L^2_v)}
 dt' d\tau+\cdots\\
  &\lesssim \sum\limits_{k,j}2^{k(\frac{n-1}{p}+1-s)}2^{\frac{1}{2}j}\int_0^t\int_{t'}^{t}e^{-c\lambda\int_{t'}^\tau\dot{\theta}(t'')dt''2^j}2^j\dot{ \theta}(\tau)d\tau\|\Delta_{k,j}(v^n\partial_nv^h)_\Phi\|_{L^p_h(L^2_v)}
 dt' +\cdots\\
 &\lesssim \frac{1}{\lambda}\int_0^t\|v^n_\Phi\|_{\dot{B}_{p,1}^{\frac{n-1}{p},\frac{1}{2}}}\|\partial_nv^h_\Phi\|_{\dot{B}_{p,1}^{\frac{n-1}{p}+1-s,\frac{1}{2}}}
 dt' +\cdots.
 \endaligned$$
In this way, the losing derivative term  can be absorbed by the left hand side of the above inequality by choosing $\lambda$ sufficient large.

{\bf{Step 5.}} The estimate of  the pressure term $\nabla_hq$.\\
To estimate the pressure term, we write
$$\nabla_hq = -2\nabla_h(-\Delta_h)^{-1}(v^n\partial_n\mathrm{div}_hv^h)+2\nabla_h(-\Delta_h)^{-1}(\mathrm{div}_hv^h\mathrm{div}_hv^h)+\cdots$$
The  $ L^1_t(\dot{B}_{p,1}^{\frac{n-1}{p}+1-s,\frac{1}{2}})$ norm of the first term in $\nabla_hq$  can be estimated by
$$\|\nabla_h(-\Delta_h)^{-1}(v^n\partial_n\mathrm{div}_hv^h)\|_{L^1_t(\dot{B}_{p,1}^{\frac{n-1}{p}+1-s,\frac{1}{2}})}\lesssim
\|(v^n\partial_n\mathrm{div}_hv^h)\|_{L^1_t(\dot{B}_{p,1}^{\frac{n-1}{p}-s,\frac{1}{2}})}.$$
Here we should point out that when $s=2,n=3$, system (\ref{e1.3}) is nothing but the 3D incompressible Navier-Stokes equations.
In this case we have to deal with $\|fg\|_{\dot{B}_{p,1}^{\frac{2}{p}-2,\frac{1}{2}}}$ type estimate.
Unfortunately, the product law in $\dot{B}_{p,1}^{\frac{2}{p}-2,\frac{1}{2}}$ is  hard to obtain since we can not control the low horizontal frequency part.

{\bf{Step 6.}} Estimate of $\theta(t).$

In this step, we want to prove that for any time $t$,  $\theta(t)$ is a small quantity. This ensures that the phase function $\Phi$ satisfies the subadditivity property.
We will go to derive a stronger estimate for

$$\aligned
Y(t)=&\|{v}^n_{\Phi}\|_{\widetilde{L}_t^\infty(\dot{B}_{p,1}^{\frac{n-1}{p}-s,\frac{1}{2}})}
+\|{v}^n_{\Phi}\|_{{L}_t^1(\dot{B}_{p,1}^{\frac{n-1}{p},\frac{1}{2}})}.
\endaligned$$
However, when $\eps>0$, we can not get the closed estimate for $Y(t)$.
Our strategy is to add
an extra term $\epsilon v^h$ under the same norm which is hidden in the pressure term $\eps^2\partial_nq.$ See section 4 for details.

{\bf{Step 7.}} Estimate of $v^n_\Phi.$

If this is done,  we could get a closed \textit{a priori} estimate (see Lemma \ref{l3.2}) and finish the proof of Theorem \ref{t1.2}.  Observing that the nonlinear term $v^n\partial_nv^n$ can be rewritten as $-v^n\mathrm{div}_h{v}^h$ due to divergence free condition. Hence, in the limiting system (\ref{elimit}), there is no loss of derivative in vertical direction on $v^n$. Thus the estimate on $v^n$ is  much easier than $v^h.$

There are also some other type of large initial data so that the Navier-Stokes equations are globally well-posed. For instance, when the domain is thin in the vertical direction, G. Raugel and G. Sell \cite{RS} were able to establish global solutions for a family of large initial data by using anisotropic Sobolev imbedding theorems (see also the paper \cite{IRS} by D. Iftimie, G. Raugel and
G. Sell). By choosing the initial data to transform
the equation into a rotating fluid equations, A.
Mahalov and B. Nicolaenko \cite{MN} obtained global solutions generated by a family of  large initial data.  A family of axi-symmetric large solutions were established in
\cite{TLL} by Thomas Hou, Z. Lei and C. M. Li.  Recently, Z. Lei, F. Lin and Y. Zhou in \cite{LLZ} proved the global well-posedness of 3D Navier-Stokes equations for a family of large initial data by making use of the structure of Helicity. The data in \cite{LLZ} are not small in $\dot{B}^{-1}_{\infty,\infty}$ even in the anisotropic sense. We also mention that for the general 3D incompressible Navier-Stokes equations which possess hyper-dissipation in horizontal direction, D. Fang and B. Han in \cite{DH} obtain the global existence result when the initial data
 belongs to the anisotropic Besov spaces.

The remaining part of the paper is organized as follows. In Section 2, we present the
basic theories of anisotropic Littlewood-Paley decomposition and anisotropic Besov spaces. Section 3 is devoted to obtaining the $a\ priori$ estimates of solution. The $\theta(t)$ will be studied
in Section 4. Finally, the proof of the main result will be given in Section 5.

\section{Anisotropic Littlewood-Paley theories and preliminary lemmas}
In this section, we first recall the definition of  the  anisotropic Littlewood-Paley decomposition and some properties about anisotropic Besov spaces. It was introduced
  by D. Iftimie in \cite{If} for the study of incompressible Navier-Stokes equations in thin domains. Let us briefly
  explain how this may be built in $\Real^n$. Let $(\chi,\varphi)$ be a couple of $C^\infty$ functions satisfying
$$\hbox{Supp}\chi\subset\{r\leq\frac{4}{3}\},
\ \ \ \
\hbox{Supp}\varphi\subset\{\frac{3}{4}\leq r\leq\frac{8}{3}\},
$$
and
$$\chi(r)+\sum_{k\in \mathbb{N}}\varphi(2^{-k}r)=1\ \ \mathrm{for}\ \ r\in\Real,$$
$$\sum_{j\in \mathbb{Z}}\varphi(2^{-j}r)=1\ \ \mathrm{for}\ \ r\in\Real\backslash \{0\}.$$
For $u\in\mathcal {S}'(\Real^n)/\mathcal{P}(\Real^n)$, we define the homogeneous dyadic decomposition on the horizontal variables by
$${{\Delta}}^h_ku=\mathcal {F}^{-1}(\varphi(2^{-k}|\xi_h|)\widehat{u})\ \ \hbox{for}\ \ k\in\mathbb{Z}.$$
Similarly, on the vertical variable, we define the homogeneous dyadic decomposition by
$${{{\Delta}}}^v_{j}u=\mathcal {F}^{-1}(\varphi(2^{-j}|\xi_n|)\widehat{u})\ \ \hbox{for}\ \ j\in\mathbb{Z}.$$
The anisotropic Littlewood-Paley decomposition satisfies the property of almost orthogonality:
$${{{\Delta}}}_k^h{{{\Delta}}}_l^h u\equiv0\quad \mathrm{if}\quad |k-l|\geq2\quad\quad\mathrm{and}\quad\quad{{{\Delta}}}_k^h(S_{l-1}^hu{{{\Delta}}}_l^hu)\equiv0\quad\mathrm{if}\quad|k-l|\geq5,$$
where $S_{l}^h$ is defined by
$$S_{l}^hu=\sum\limits_{l'\leq l-1}{{\Delta}}_{l'}^hu.$$
Similar properties hold for ${{{\Delta}}}_{j}^v$.
In this paper, we shall use the following anisotropic version of Besov spaces \cite{If}. In what follows, we denote for abbreviation $$\Delta_{k,j}f\overset{\text{def}}={{\Delta}}_k^h{{\Delta}}_j^vf.$$
\begin{defn}[Anisotropic Besov space]\label{D2.1}
Let $(p,r)\in[1,\infty]^2$, $\sigma,s\in\Real$ and $u\in\mathcal {S}'(\Real^n)/\mathcal{P}(\Real^n),$ we set

$$\|u\|_{\dot{B}^{\sigma,s}_{p,r}}\overset{\mathrm{def}}=\|2^{k\sigma}2^{js}\|{\Delta}_{k,j}u\|_{L^p_h(L_v^2)}\|_{l^r(\mathbb{Z}^2)}.$$
(1) For $\sigma<\frac{n-1}{p},s<\frac{1}{2}$( $\sigma=\frac{n-1}{p}$ or $s=\frac{1}{2}$ if $r=1$), we define
$$\dot{B}^{\sigma,s}_{p,r}(\Real^n)\overset{\mathrm{def}}=\{u\in\mathcal {S}'(\Real^n)\mid \|u\|_{\dot{B}^{\sigma,s}_{p,r}}<\infty \}.$$
(2) If $k,l\in\mathbb{N}$ and $\frac{n-1}{p}+k < \sigma<\frac{n-1}{p}+k+1$, $\frac{1}{2}+l < s<\frac{1}{2}+l+1$ ( $\sigma=\frac{n-1}{p}+k+1$ or $s=\frac{1}{2}+l+1$ if $r=1$),
then $\dot{B}^{\sigma,s}_{p,r}(\Real^n)$ is defined as
the subset of  $u\in\mathcal {S}'(\Real^n)$ such that $\partial_h^\beta \partial_3^\alpha u\in \dot{B}^{\sigma-k,s-l}_{p,r}(\Real^n)$
whenever $|\beta|=k, \alpha=l.$
\end{defn}
The study of non-stationary equation requires spaces of the type
$L^\rho_T(X)$ for appropriate Banach spaces $X$. In
our case, we expect $X$ to be an anisotropic Besov space. So it
is natural to localize the equations through anisotropic Littlewood-Paley
decomposition. We then get estimates for each dyadic block and
perform integration in time. As in \cite{Ch2}, we define the so called Chemin-Lerner type spaces:
\begin{defn}\label{D2.3}
Let $(p,r)\in[1,\infty]^2$, $\sigma,s\in\Real$ and $T\in(0,\infty],$ we set
$$\|u\|_{\widetilde{L}^\rho_T(\dot{B}^{\sigma,s}_{p,r})}\overset{\mathrm{def}}=
\|2^{k\sigma}2^{js}\|{\Delta}_{k,j}u\|_{L_T^\rho(L^p_h(L_v^2))}\|_{l^r(\mathbb{Z}^2)}$$
and define the space $\widetilde{L}^\rho_T(\dot{B}^{\sigma,s}_{p,r})(\Real^n)$to be the subset of distributions in
$u\in\mathcal {S}'(0,T)\times\mathcal{S}'(\Real^n)$ with finite $\widetilde{L}^\rho_T(\dot{B}^{\sigma,s}_{p,r})$ norm.
\end{defn}

In order to investigate the continuity properties of the products of two temperate distributions $f$ and $g$ in anisotropic Besov spaces, we
  then recall the isotropic product decomposition which is a simple splitting device going back to the pioneering work
  by J.-M. Bony \cite{Bo}. Let $f,g\in\mathcal {S}'(\Real^n)$,
$$fg=T(f,g)+\widetilde{T}(f,g)+R(f,g),$$
where the paraproducts $T(f,g)$ and $\widetilde{T}(f,g)$ are defined by
$$T(f,g)=\sum\limits_{k\in\mathbb{Z}}S_{k-1}f{{\Delta}}_kg,\quad  \widetilde{T}(f,g)=\sum\limits_{k\in\mathbb{Z}}{{\Delta}}_kf S_{k-1}g$$
and the remainder
$$R(f,g)=\sum\limits_{k\in\mathbb{Z}}{{\Delta}}_{k}f\widetilde{{{\Delta}}}_kg\quad\mathrm{with}\quad\widetilde{{{\Delta}}}_kg
=\sum\limits_{k'=k-1}^{k+1}{{\Delta}}_{k'}g.$$
Similarly, we can define the decompositions for both horizontal variable $x_h$ and vertical variable $x_n$.
Indeed, we have the following split in $x_h.$
$$fg=T^h(f,g)+\widetilde{T}^h(f,g)+R^h(f,g),$$
with
$$T^h(f,g)=\sum\limits_{k\in\mathbb{Z}}S^h_{k-1}f{{\Delta}}^h_kg,\quad  \widetilde{T}^h(f,g)=\sum\limits_{k\in\mathbb{Z}}{{\Delta}}^h_kf S^h_{k-1}g$$
and
$$R^h(f,g)=\sum\limits_{k\in\mathbb{Z}}{{\Delta}}^h_{k}f\widetilde{{{\Delta}}}^h_kg\quad\mathrm{where}
\quad\widetilde{{{\Delta}}}^h_kg=\sum\limits_{k'=k-1}^{k+1}{{\Delta}}^h_{k'}g.$$
The decomposition in vertical variable $x_n$ can be defined by the same line. Thus, we can write $fg$ as
\begin{align}\label{e2.3}
\begin{split}
fg&=(T^h+\widetilde{T}^h+R^h)(T^v+\widetilde{T}^v+R^v)(f,g)\\
&=T^hT^v(f,g)+T^h\widetilde{T}^v(f,g)+T^hR^v(f,g)\\
&\quad+\widetilde{T}^hT^v(f,g)+\widetilde{T}^h\widetilde{T}^v(f,g)+\widetilde{T}^hR^v(f,g)\\
&\quad+R^hT^v(f,g)+R^h\widetilde{T}^v(f,g)+R^hR^v(f,g).
\end{split}
\end{align}
Each term of (\ref{e2.3}) has an explicit definition. Here
 $$T^hT^v(f,g)=\sum\limits_{(k,j)\in \mathbb{Z}^2}S_{k-1}^hS_{j-1}^vf{\Delta}_{k,j}g,\quad \widetilde{T}^hT^v(f,g)=\sum\limits_{(k,j)\in \mathbb{Z}^2}{{\Delta}}_{k}^hS_{j-1}^vfS_{k-1}^h{{\Delta}}_j^vg.$$
 Similarly,
 $$R^hT^v(f,g)=\sum\limits_{(k,j)\in \mathbb{Z}^2}{{\Delta}}_{k}^hS_{j-1}^vf\widetilde{{{\Delta}}}_k^h{{\Delta}}_j^vg,\quad R^hR^v(f,g)=\sum\limits_{(k,j)\in \mathbb{Z}^2}\Delta_{k,j} f\widetilde{\Delta}_{k,j}g,$$
 and so on.

At this moment, we can state an important product law in anisotropic Besov spaces. The case $p=2, n=3$ was proved in \cite{GHZ}.  For completeness, here we prove a similar result in the $L^p$ framework.
\begin{lem}\label{p2.5}
Let $1\leq p<n-1$ and  $(\sigma_1,\sigma_2)$ be in $\Real^2$.
If $\sigma_1,\sigma_2\leq \frac{n-1}{p}$ and
$$\aligned
&\sigma_1+\sigma_2>(n-1)\max(0,\frac{2}{p}-1),
\endaligned$$
then we have for any $f\in \dot{B}^{\sigma_1,\frac{1}{2}}_{p,1}(\mathbb{R}^n)$ and $g\in \dot{B}^{\sigma_2,\frac{1}{2}}_{p,1}(\mathbb{R}^n)$,
\begin{align}\label{e2.4}
\|fg\|_{\dot{B}^{\sigma_1+\sigma_2-\frac{n-1}{p},\frac{1}{2}}_{p,1}}\lesssim \|f\|_{\dot{B}^{\sigma_1,\frac{1}{2}}_{p,1}}\|g\|_{\dot{B}^{\sigma_2,\frac{1}{2}}_{p,1}}.
\end{align}
\end{lem}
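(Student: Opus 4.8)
The plan is to run the nine-piece anisotropic Bony decomposition \eqref{e2.3} and to estimate each of $T^hT^v(f,g),\dots,R^hR^v(f,g)$ by hand. Swapping $f$ and $g$ interchanges $T^h\leftrightarrow\widetilde{T}^h$, $T^v\leftrightarrow\widetilde{T}^v$ and $\sigma_1\leftrightarrow\sigma_2$, and the hypotheses are symmetric in $(\sigma_1,\sigma_2)$, so it suffices to bound in $\dot{B}^{\sigma_1+\sigma_2-\frac{n-1}{p},\frac12}_{p,1}$ the five representatives $T^hT^v(f,g)$, $\widetilde{T}^hT^v(f,g)$, $T^hR^v(f,g)$, $R^hT^v(f,g)$ and $R^hR^v(f,g)$. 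The tools I would use are the spectral localization properties recalled above, the anisotropic Bernstein inequalities ($\|u\|_{L^b_h}\lesssim 2^{(n-1)k(\frac1a-\frac1b)}\|u\|_{L^a_h}$ for $u$ with horizontal spectrum in a ball of radius $2^k$ and $0<a\le b\le\infty$, together with the one-dimensional analogue $\|u\|_{L^b_v}\lesssim 2^{j(\frac1a-\frac1b)}\|u\|_{L^a_v}$ in the vertical variable), and the shorthand $\|\Delta_{k,j}f\|_{L^p_h(L^2_v)}=2^{-k\sigma_1-j/2}a_{k,j}$, $\|\Delta_{k,j}g\|_{L^p_h(L^2_v)}=2^{-k\sigma_2-j/2}b_{k,j}$ with $\|a\|_{\ell^1(\mathbb{Z}^2)}=\|f\|_{\dot{B}^{\sigma_1,1/2}_{p,1}}$ and $\|b\|_{\ell^1(\mathbb{Z}^2)}=\|g\|_{\dot{B}^{\sigma_2,1/2}_{p,1}}$.

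For the pieces not containing $R^h$ (those built from $T^h$ or $\widetilde{T}^h$, including $T^hR^v$) I would put the low-horizontal-frequency factor in $L^\infty_h$, and wherever a vertical paraproduct occurs the low-vertical-frequency factor in $L^\infty_v$, both by Bernstein. For instance $\Delta_{k,j}(T^hT^v(f,g))$ reduces to terms $S^h_{k'-1}S^v_{j'-1}f\,\Delta_{k',j'}g$ with $k'\sim k$, $j'\sim j$, and $\|S^h_{k'-1}S^v_{j'-1}f\|_{L^\infty_h(L^\infty_v)}\lesssim\sum_{k''<k',\, j''<j'}2^{(n-1)k''/p}2^{j''/2}\|\Delta_{k'',j''}f\|_{L^p_h(L^2_v)}$; the factor $2^{k''(\frac{n-1}{p}-\sigma_1)}$ (from $\sigma_1\le\frac{n-1}{p}$) together with the fact that the vertical index is exactly $\frac12$ keep this sum finite, and the weight $2^{k(\sigma_1+\sigma_2-\frac{n-1}{p})}2^{j/2}$ attached to $\Delta_{k,j}(T^hT^v)$ cancels whatever growth in $(k',j')$ remains, leaving $\sum_{k,j}(\ell^1\text{ sequence})_{k,j}\,b_{k,j}\lesssim\|f\|\|g\|$. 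None of these pieces uses the hypothesis $\sigma_1+\sigma_2>(n-1)\max(0,\frac2p-1)$, only $\sigma_1,\sigma_2\le\frac{n-1}{p}$; in $T^hR^v$ the vertical remainder is closed by $L^2_v\cdot L^2_v\hookrightarrow L^1_v$ followed by the vertical Bernstein $L^1_v\to L^2_v$, which costs only $2^{j'/2}$ with the small index and is therefore harmless.

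The real work is in the three pieces carrying a horizontal remainder, $R^hR^v$, $R^hT^v$ and $R^h\widetilde{T}^v$; I sketch $R^hR^v$. Applying $\Delta_{k',j'}$ restricts the sums to $k\ge k'-N_0$ and $j\ge j'-N_0$. For $1\le p\le2$ I would estimate a single block by first lowering the two inner blocks from $L^p_h$ to $L^2_h$ by Bernstein (the price is $2^{(n-1)k(\frac2p-1)}$, carrying the \emph{large} index $k$), then using Cauchy--Schwarz $L^2(\mathbb{R}^n)\cdot L^2(\mathbb{R}^n)\hookrightarrow L^1(\mathbb{R}^n)$, then raising the outer block back to $L^p_h(L^2_v)$ by Bernstein (gaining $2^{(n-1)k'(1-\frac1p)}$ horizontally and $2^{j'/2}$ vertically, both carrying the \emph{small} indices), so that
$$\|\Delta_{k',j'}(\Delta_{k,j}f\,\widetilde{\Delta}_{k,j}g)\|_{L^p_h(L^2_v)}\lesssim 2^{(n-1)k'(1-\frac1p)}\,2^{j'/2}\,2^{(n-1)k(\frac2p-1)}\,\|\Delta_{k,j}f\|_{L^p_h(L^2_v)}\|\widetilde{\Delta}_{k,j}g\|_{L^p_h(L^2_v)};$$
for $p>2$ the H\"older step goes straight into $L^{p/2}_h$, giving the outer gain $2^{(n-1)k'/p}$ and no inner loss. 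Substituting the shorthand and forming the $\dot{B}^{\sigma_1+\sigma_2-\frac{n-1}{p},\frac12}_{p,1}$-sum, the powers of $2^{k'}$ collect into $2^{k'\mu}$ with $\mu:=\sigma_1+\sigma_2-(n-1)\max(0,\frac2p-1)$ and the powers of $2^{j'}$ into $2^{j'}$; after exchanging the order of summation, $\sum_{k'\le k+N_0}2^{k'\mu}$ converges \emph{precisely} because $\mu>0$ by hypothesis, $\sum_{j'\le j+N_0}2^{j'}$ converges unconditionally, and what remains is $\sum_{k,j}a_{k,j}b_{k,j}\le\|a\|_{\ell^1}\|b\|_{\ell^1}\lesssim\|f\|\|g\|$. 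The terms $R^hT^v$ and $R^h\widetilde{T}^v$ are identical, except that one vertical factor is first placed in $L^\infty_v$, again at no cost since the vertical index $\frac12$ is positive.

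Summing the nine contributions yields \eqref{e2.4}. I expect the horizontal remainder with $1\le p<2$ to be the sticking point: the naive H\"older bound $L^p_h\cdot L^p_h\hookrightarrow L^{p/2}_h$ lands in a quasi-Banach space on which the dyadic projections are not uniformly bounded, which forces the detour through $L^2_h$; the whole estimate then hinges on balancing the bad-index loss $2^{(n-1)k(\frac2p-1)}$ against the good-index gain $2^{(n-1)k'(1-\frac1p)}$, so that the summability threshold works out to be exactly $\sigma_1+\sigma_2>(n-1)(\frac2p-1)$ and nothing stronger. A secondary point is that the endpoints $\sigma_i=\frac{n-1}{p}$ and the critical vertical index $\frac12$ are admissible only because $r=1$; this is absorbed into the $\ell^1$-convolution bookkeeping indicated above.
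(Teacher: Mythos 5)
Your proposal follows the same route as the paper's proof: the nine-piece anisotropic Bony decomposition \eqref{e2.3}, Bernstein to put the low-frequency factors of the paraproduct terms in $L^\infty$ (using only $\sigma_1,\sigma_2\le\frac{n-1}{p}$), and the same $p$-dependent treatment of the horizontal remainder --- $L^{p/2}_h$ H\"older for $p\ge 2$, and the detour through $L^2_h$ via Cauchy--Schwarz plus the Bernstein gain $2^{(n-1)(1-\frac1p)k'}$ on the small index for $1\le p<2$ --- with the hypothesis $\sigma_1+\sigma_2>(n-1)\max(0,\frac2p-1)$ entering exactly in the geometric series over the remainder indices. The symmetry reduction to five representatives and the explicit handling of $T^hR^v$ and $R^hR^v$ are a welcome elaboration of the paper's remark that ``the other terms can be followed exactly in the same way,'' but the method and the places where each hypothesis bites are identical.
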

\begin{proof}
According to (\ref{e2.3}), we first give the bound of $T^hT^v(f,g)$. Indeed, applying H\"{o}lder and Bernstein inequality, we get that
$$\aligned
\|{\Delta}_{k,j}(T^hT^v(f,g))&\|_{L_h^p(L^2_v)}\lesssim  \sum\limits_{\substack{|k-k'|\leq4\\|j-j'|\leq4}}
\|S^h_{k'-1}S^v_{j'-1}f\|_{L^\infty}\|\Delta_{k',j'}g\|_{L_h^p(L^2_v)}\\
&\lesssim  \sum\limits_{\substack{|k-k'|\leq4\\|j-j'|\leq4}} \sum\limits_{\substack{k''\leq k'-2\\j''\leq j'-2}}2^{k''\sigma_1}2^{\frac{1}{2}j''}\|\Delta_{k'',j''}f\|_{L_h^p(L^2_v)}
\cdot2^{k'\sigma_2}2^{\frac{1}{2}j'}\|\Delta_{k',j'}g\|_{L_h^p(L^2_v)}\\
&\quad\times 2^{(k''-k')(\frac{n-1}{p}-\sigma_1)}2^{(k-k')(\sigma_1+\sigma_2-\frac{n-1}{p})}2^{\frac{1}{2}(j-j')}2^{-k(\sigma_1+\sigma_2-\frac{n-1}{p})}2^{-\frac{1}{2}j}.
\endaligned$$
Since $\sigma_1 \leq \frac{n-1}{p}$, we obtain that
$$\aligned
\|{\Delta}_{k,j}(T^hT^v(f,g))\|_{L_h^p(L^2_v)}&\lesssim  c_{k,j}2^{-k(\sigma_1+\sigma_2-\frac{n-1}{p})}2^{-\frac{1}{2}j}\|f\|_{\dot{B}^{\sigma_1,\frac{1}{2}}_{p,1}}\|g\|_{\dot{B}^{\sigma_2,\frac{1}{2}}_{p,1}},
\endaligned$$
where the sequence $\{c_{k,j}\}_{(k,j)\in\mathbb{Z}^2}$ satisfies $\|c_{k,j}\|_{l^1(\mathbb{Z}^2)}=1$.
This gives the estimate of $T^hT^v(f,g)$.

Similarly, for $\widetilde{T}^hT^v(f,g)$,  we have
$$\aligned
\|{\Delta}_{k,j}(\widetilde{T}^hT^v(f,g))&\|_{L_h^p(L^2_v)}\lesssim  \sum\limits_{\substack{|k-k'|\leq4\\|j-j'|\leq4}}
\|{{\Delta}}^h_{k'}S^v_{j'-1}f\|_{L^p_h(L_v^\infty)}\|S^h_{k'-1}{{\Delta}}^v_{j'}g\|_{L^\infty_h(L_v^2)}\\
&\lesssim  \sum\limits_{\substack{|k-k'|\leq4\\|j-j'|\leq4}} \sum\limits_{\substack{k''\leq k'-2\\j''\leq j'-2}}2^{k'\sigma_1}2^{\frac{1}{2}j''}\|\Delta_{k',j''}f\|_{L^p_h(L^2_v)}
\cdot2^{k''\sigma_2}2^{\frac{1}{2}j'}\|\Delta_{k'',j'}g\|_{L^p_h(L^2_v)}\\
&\quad\times 2^{(k''-k')(\frac{n-1}{p}-\sigma_2)}2^{(k-k')(\sigma_1+\sigma_2-\frac{n-1}{p})}2^{\frac{1}{2}(j-j')}2^{-k(\sigma_1+\sigma_2-\frac{n-1}{p})}2^{-\frac{1}{2}j}.
\endaligned$$
Again, $\sigma_2\leq \frac{n-1}{p}$ implies that
$$\aligned
\|{\Delta}_{k,j}(\widetilde{T}^hT^v(f,g))\|_{L_h^p(L^2_v)}&\lesssim  c_{k,j}2^{-k(\sigma_1+\sigma_2-\frac{n-1}{p})}2^{-\frac{1}{2}j}\|f\|_{\dot{B}^{\sigma_1,\frac{1}{2}}_{p,1}}\|g\|_{\dot{B}^{\sigma_2,\frac{1}{2}}_{p,1}}.
\endaligned$$

The estimate on the remainder operator which concerns  the horizontal variable $R^hT^v(f,g)$ may be more complicated.
When $2\leq p$, the strategy is following:
$$\aligned
\|{\Delta}_{k,j}(R^hT^v(f,g))&\|_{L_h^p(L^2_v)}\lesssim  \sum\limits_{\substack{k'\geq k-2\\|j-j'|\leq4}}
\|{{\Delta}}^h_{k'}S^v_{j'-1}f\widetilde{{{\Delta}}}^h_{k'}{{\Delta}}^v_{j'}g\|_{L^{\frac{p}{2}}_h(L_v^2)}2^{\frac{n-1}{p}k}\\
&\lesssim  \sum\limits_{\substack{k'\geq k-2\\|j-j'|\leq4}} \sum\limits_{j''\leq j'-2}2^{k'\sigma_1}2^{\frac{1}{2}j''}\|\Delta_{k',j''}f\|_{L^{p}_h(L_v^2)}
\cdot2^{k'\sigma_2}2^{\frac{1}{2}j'}\|\widetilde{{{\Delta}}}^h_{k'}{{\Delta}}^v_{j'}g\|_{L^p_h(L_v^2)}\\
&\quad\times2^{(k-k')(\sigma_1+\sigma_2)}2^{\frac{1}{2}(j-j')}2^{-k(\sigma_1+\sigma_2-\frac{n-1}{p})}2^{-\frac{1}{2}j}.
\endaligned$$
As $\sigma_1+\sigma_2>0$ if $2\leq p$, we have
$$\aligned
\|{\Delta}_{k,j}(R^hT^v(f,g))\|_{L_h^p(L^2_v)}&\lesssim  c_{k,j}2^{-k(\sigma_1+\sigma_2-\frac{n-1}{p})}2^{-\frac{1}{2}j}\|f\|_{\dot{B}^{\sigma_1,\frac{1}{2}}_{p,1}}\|g\|_{\dot{B}^{\sigma_2,\frac{1}{2}}_{p,1}}.
\endaligned$$
In the case $1\leq p <2$, we have
$$\aligned
\|{\Delta}_{k,j}(R^hT^v(f,g))&\|_{L_h^p(L^2_v)}\lesssim  \sum\limits_{\substack{k'\geq k-2\\|j-j'|\leq4}}
\|{{\Delta}}^h_{k'}S^v_{j'-1}f\widetilde{{{\Delta}}}^h_{k'}{{\Delta}}^v_{j'}g\|_{L^{1}_h(L_v^2)}2^{(n-1)(1-\frac{1}{p})k}\\
&\lesssim  \sum\limits_{\substack{k'\geq k-2\\|j-j'|\leq4}} \sum\limits_{j''\leq j'-2}2^{\frac{1}{2}j''}\|\Delta_{k',j''}f\|_{L^{2}}
\|\widetilde{{{\Delta}}}^h_{k'}{{\Delta}}^v_{j'}g\|_{L^2}2^{(n-1)(1-\frac{1}{p})k}\\
&\lesssim  \sum\limits_{\substack{k'\geq k-2\\|j-j'|\leq4}} \sum\limits_{j''\leq j'-2}2^{k'\sigma_1}2^{\frac{1}{2}j''}\|\Delta_{k',j''}f\|_{L_h^p(L^2_v)}
\cdot2^{k'\sigma_2}2^{\frac{1}{2}j'}\|\widetilde{{{\Delta}}}^h_{k'}{{\Delta}}^v_{j'}g\|_{L_h^p(L^2_v)}\\
&\quad\times2^{(k-k')(\sigma_1+\sigma_2-(n-1)(\frac{2}{p}-1))}2^{\frac{1}{2}(j-j')}2^{-k(\sigma_1+\sigma_2-\frac{n-1}{p})}2^{-\frac{1}{2}j}.
\endaligned$$
As $\sigma_1+\sigma_2>(n-1)(\frac{2}{p}-1)$ if $1\leq p<2$, we have
$$\aligned
\|{\Delta}_{k,j}(R^hT^v(f,g))\|_{L_h^p(L^2_v)}&\lesssim  c_{k,j}2^{-k(\sigma_1+\sigma_2-\frac{n-1}{p})}2^{-\frac{1}{2}j}\|f\|_{\dot{B}^{\sigma_1,\frac{1}{2}}_{p,1}}\|g\|_{\dot{B}^{\sigma_2,\frac{1}{2}}_{p,1}}.
\endaligned$$
The other terms can be followed exactly in the same way, here we omit the details. These complete the proof of this lemma.
\end{proof}

Throughout this paper, $\Phi$ denotes a locally bounded function on $\Real^+\times\Real$ which satisfies the following subadditivity (see \eqref{e3.5} for the explicit expression of $\Phi$)
$$\Phi(t,\xi_n)\leq  \Phi(t,\xi_n-\eta_n)+\Phi(t,\eta_n).$$
For any function $f$ in $\mathcal {S}'(0,T)\times\mathcal{S}'(\Real^n)$, we define
\begin{align}\label{ephi}
f_\Phi(t,x_h,x_n)=\mathcal {F}^{-1}\left(e^{\Phi(t,\xi_n)}\hat{f}(t,x_h,\xi_n)\right)
\end{align}
Let us keep the following fact in mind that the map $f\mapsto f^+$ preserves the norm of $L^p_h(L_v^2)$, where $f^+(t,x_h,x_n)$ represents the inverse Fourier transform of $|\hat{f}(t,x_h,\xi_n)|$ on vertical variable, defined as
$$f^+(t,x_h,x_n)\overset{\mathrm{def}}=\mathcal {F}^{-1}|\hat{f}(t,x_h,\xi_n)|.$$
On the basis of these facts, we have the following  weighted inequality as in  Lemma \ref{p2.5}.
\begin{lem}\label{p2.8}
Let $1\leq p<n-1$ and $(\sigma_1,\sigma_2)$ be in $\Real^2$.
If $\sigma_1,\sigma_2\leq \frac{n-1}{p}$ and
$$\aligned
&\sigma_1+\sigma_2>(n-1)\max(0,\frac{2}{p}-1),
\endaligned$$
 then we have for any $f_\Phi \in \dot{B}^{\sigma_1,\frac{1}{2}}_{p,1}(\mathbb{R}^n)$ and $g_\Phi \in \dot{B}^{\sigma_2,\frac{1}{2}}_{p,1}(\mathbb{R}^n)$,
\begin{align}\nonumber
\|(fg)_\Phi\|_{\dot{B}^{\sigma_1+\sigma_2-\frac{n-1}{p},\frac{1}{2}}_{p,1}}\lesssim  \|f_\Phi\|_{\dot{B}^{\sigma_1,\frac{1}{2}}_{p,1}}\|g_\Phi\|_{\dot{B}^{\sigma_2,\frac{1}{2}}_{p,1}}.
\end{align}
\end{lem}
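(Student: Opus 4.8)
The plan is to reduce Lemma~\ref{p2.8} to Lemma~\ref{p2.5} by exploiting the subadditivity of $\Phi$ together with the norm-preservation property of the map $f\mapsto f^+$ recorded just before the statement. The key observation is that $\Phi$ acts only on the vertical frequency $\xi_n$, so all the horizontal Littlewood--Paley analysis carried out in the proof of Lemma~\ref{p2.5} is untouched; only the interplay with $\Delta_j^v$ needs care. First I would run through the nine pieces of the Bony decomposition \eqref{e2.3} exactly as in the proof of Lemma~\ref{p2.5}, but applied to $(fg)_\Phi$. For a generic term, say $T^hT^v(f,g)$, one has on the Fourier side that the vertical frequency $\xi_n$ of $\Delta_{k,j}(T^hT^v(f,g))$ is (up to the usual $\pm$ overlaps) the sum $\xi_n=\zeta_n+\eta_n$ of the vertical frequencies carried by the low-frequency factor $S^v_{j'-1}f$ and the high-frequency factor $\Delta^v_{j'}g$. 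Subadditivity then gives
\begin{equation}\nonumber
e^{\Phi(t,\xi_n)}\le e^{\Phi(t,\zeta_n)}e^{\Phi(t,\eta_n)},
\end{equation}
so pointwise in frequency the weight $e^{\Phi}$ on the product is dominated by the product of the weights on the two factors.

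The next step is to convert this frequency-space pointwise bound into an honest $L^p_h(L^2_v)$ estimate. Here the device is: writing $\widehat{(fg)_\Phi}$ and bounding $e^{\Phi(t,\xi_n)}$ by $e^{\Phi(t,\zeta_n)}e^{\Phi(t,\eta_n)}$ inside the convolution integral, one sees that $\big|\widehat{\Delta_{k,j}(T^hT^v(f,g))_\Phi}\big|$ is bounded, frequency by frequency, by the corresponding quantity formed from $(S^{h}_{k'-1}S^v_{j'-1}f)_\Phi^+$ and $(\Delta_{k',j'}g)_\Phi^+$ — i.e. the same bilinear expression with $f$ replaced by $f_\Phi^+$ and $g$ by $g_\Phi^+$. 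Since $f\mapsto f^+$ preserves the $L^p_h(L^2_v)$ norm of every dyadic block, and since $(f_\Phi)^+$ has the same vertical Fourier support structure and the same block norms as $f_\Phi$, all the Hölder/Bernstein estimates from the proof of Lemma~\ref{p2.5} apply verbatim to the blocks of $f_\Phi^+$ and $g_\Phi^+$. Carrying the argument through the $T^hT^v$, $\widetilde T^hT^v$ and $R^hT^v$ pieces (and their counterparts obtained by swapping $h\leftrightarrow v$ and $f\leftrightarrow g$, using that the weight only touches the vertical variable) produces
\begin{equation}\nonumber
\|(fg)_\Phi\|_{\dot B^{\sigma_1+\sigma_2-\frac{n-1}{p},\frac12}_{p,1}}\lesssim \|f_\Phi^+\|_{\dot B^{\sigma_1,\frac12}_{p,1}}\|g_\Phi^+\|_{\dot B^{\sigma_2,\frac12}_{p,1}}=\|f_\Phi\|_{\dot B^{\sigma_1,\frac12}_{p,1}}\|g_\Phi\|_{\dot B^{\sigma_2,\frac12}_{p,1}},
\end{equation}
the last equality again by norm-preservation of $f\mapsto f^+$.

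The main obstacle I expect is the bookkeeping for the remainder terms involving the vertical remainder $R^v$ — e.g. $T^hR^v(f,g)$, $R^hR^v(f,g)$ — where the vertical frequency of the output is no longer a clean sum of the two factors' vertical frequencies but only controlled by $|\xi_n|\lesssim 2^{j'}$ with both factors also at scale $\sim 2^{j'}$. One must check that subadditivity still yields $e^{\Phi(t,\xi_n)}\lesssim e^{\Phi(t,\zeta_n)}e^{\Phi(t,\eta_n)}$ in that regime; this is where local boundedness of $\Phi$ and the fact that $\Phi(t,\cdot)$ is (for the explicit choice \eqref{e3.5}) of the form $\mathrm{const}\cdot|\xi_n|$ are used, so that $\Phi(t,\xi_n)\le\Phi(t,\zeta_n+\eta_n)\le\Phi(t,\zeta_n)+\Phi(t,\eta_n)$ holds on the relevant frequency supports. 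Everything else is a routine repetition of Lemma~\ref{p2.5}; I would therefore only spell out one paraproduct term and one remainder term in detail and state that the others follow identically.
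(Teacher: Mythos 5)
Your proposal matches the paper's proof essentially exactly: exhibit the product on the Fourier side as a vertical convolution, use subadditivity of $\Phi$ to dominate $e^{\Phi(t,\xi_n)}$ by the product of weights inside the convolution integral, replace each block by its positive-Fourier counterpart $(\cdot)^+$ (which preserves $L^p_h(L^2_v)$), and then repeat the block estimates of Lemma~\ref{p2.5}. The one minor overcomplication is your worry about the $R^v$ pieces --- since the output frequency is always the exact sum $\xi_n=\zeta_n+\eta_n$ inside the convolution integral, the assumed subadditivity applies verbatim there too and no appeal to the explicit linear form \eqref{e3.5} is needed.
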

\begin{proof}
We only prove the  $\dot{B}^{\sigma_1+\sigma_2-\frac{n-1}{p},\frac{1}{2}}_{p,1}$ norm of  $T^hT^v(f,g)_\Phi$. For fixed $k,j$, we have
\begin{align}
\begin{split}\nonumber
&\|{\Delta}_{k,j}(T^hT^v(f,g)_\Phi)\|_{L_h^p(L^2_v)}\\
&\lesssim \sum\limits_{\substack{|k-k'|\leq 4\\|j-j'|\leq 4}}
\|e^{\Psi(t,\xi_n)}\mathcal {F}(S^h_{k'-1} S^v_{j'-1}f)(x_h,\cdot)\star\mathcal {F}(\Delta_{k',j'}g)(x_h,\cdot)\|_{L_h^p(L^2_v)}\\
&\lesssim  \sum\limits_{\substack{|k-k'|\leq 4\\|j-j'|\leq 4}}
\||\mathcal {F}(S^h_{k'-1} S^v_{j'-1}f_\Phi)(x_h,\cdot)|\star|\mathcal {F}(\Delta_{k',j'}g_\Phi)(x_h,\cdot)|\|_{L_h^p(L^2_v)}\\
&\lesssim  \sum\limits_{\substack{|k-k'|\leq 4\\|j-j'|\leq 4}}\sum\limits_{\substack{k''\leq k'-2\\j''\leq j '-2}}
\||\mathcal {F}(\Delta_{k'',j''} f_\Phi)(x_h,\cdot)|\star|\mathcal {F}(\Delta_{k',j'}g_\Phi)(x_h,\cdot)|\|_{L_h^p(L^2_v)}\\\
&\lesssim  \sum\limits_{\substack{|k-k'|\leq 4\\|j-j'|\leq 4}}\sum\limits_{\substack{k''\leq k'-2\\j''\leq j '-2}}
\|(\Delta_{k'',j''}f_\Phi)^+\|_{L_h^p(L^2_v)}\|(\Delta_{k',j'}g_\Phi)^+\|_{L_h^p(L^2_v)}2^{\frac{n-1}{p}k''}2^{\frac{1}{2}j''}.
\end{split}
\end{align}
Using the fact that $f\mapsto f^+$ preserves the norm of  $L_h^p(L^2_v)$, we then get by the similar method as in Lemma \ref{p2.5} that
\begin{align}
\begin{split}\nonumber
\|(T^hT^v(f,g)_\Phi)\|_{\dot{B}^{\sigma_1+\sigma_2-\frac{n-1}{p},\frac{1}{2}}_{p,1}}
\lesssim \|f_\Phi\|_{\dot{B}^{\sigma_1,\frac{1}{2}}_{p,1}}
\|g_\Phi\|_{\dot{B}^{\sigma_2,\frac{1}{2}}_{p,1}}.
\end{split}
\end{align}
The other terms in (\ref{e2.3}) can be estimated by the same method and finally, we have
\begin{align}
\begin{split}\nonumber
\|(fg)_\Phi\|_{\dot{B}^{\sigma_1+\sigma_2-\frac{n-1}{p},\frac{1}{2}}_{p,1}}
\lesssim \|f_\Phi\|_{\dot{B}^{\sigma_1,\frac{1}{2}}_{p,1}}
\|g_\Phi\|_{\dot{B}^{\sigma_2,\frac{1}{2}}_{p,1}}.
\end{split}
\end{align}
\end{proof}
The following lemma  is a direct consequence of Lemma  \ref{p2.8}.
\begin{lem}\label{p2.9}
Let $1\leq p<n-1$, $\rho\in[1,\infty]$, $(\rho_1,\rho_2)\in[1,\infty]^2$ and $(\sigma_1,\sigma_2)$ be in $\Real^2$. Assume that
$$\frac{1}{\rho}\overset{\mathrm{def}}=\frac{1}{\rho_1}+\frac{1}{\rho_2}.
$$
If $\sigma_1,\sigma_2\leq \frac{n-1}{p}$ and
$$\aligned
&\sigma_1+\sigma_2>(n-1)\max(0,\frac{2}{p}-1),
\endaligned$$
 then we have for any $f_\Phi \in \widetilde{L}_T^{\rho_1}(\dot{B}^{\sigma_1,\frac{1}{2}}_{p,1}(\mathbb{R}^n))$ and $g_\Phi \in \widetilde{L}_T^{\rho_2}(\dot{B}^{\sigma_2,\frac{1}{2}}_{p,1}(\mathbb{R}^n))$,
\begin{align}\nonumber
\|(fg)_\Phi\|_{\widetilde{L}_T^{\rho}(\dot{B}^{\sigma_1+\sigma_2-\frac{n-1}{p},\frac{1}{2}}_{p,1})}\lesssim  \|f_\Phi\|_{\widetilde{L}_T^{\rho_1}(\dot{B}^{\sigma_1,\frac{1}{2}}_{p,1})}\|g_\Phi\|_{\widetilde{L}_T^{\rho_2}(\dot{B}^{\sigma_2,\frac{1}{2}}_{p,1})}.
\end{align}
\end{lem}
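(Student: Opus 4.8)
The plan is to revisit the proof of Lemma \ref{p2.8} and exploit the fact that it is carried out entirely ``block by block''. Concretely, for each of the nine pieces $\mathcal{O}\in\{T^hT^v,\widetilde T^hT^v,R^hT^v,\dots\}$ of the Bony decomposition \eqref{e2.3}, that proof produces a \emph{pointwise-in-time} estimate of the shape
$$\|\Delta_{k,j}\big(\mathcal{O}(f,g)\big)_\Phi(t)\|_{L^p_h(L^2_v)} \lesssim \sum_{\substack{|k-k'|\le 4 \\ |j-j'|\le 4}} \sum_{\substack{k''\le k'-2 \\ j''\le j'-2}} a_{k,j,k',j',k'',j''}\,\|\Delta_{k'',j''}f_\Phi(t)\|_{L^p_h(L^2_v)}\,\|\Delta_{k',j'}g_\Phi(t)\|_{L^p_h(L^2_v)},$$
where the coefficients $a$ are purely geometric weights in the dyadic indices, \emph{independent of $t$}, and where the $\Phi$-weight has already been absorbed through the $f\mapsto f^+$ isometry exactly as in Lemma \ref{p2.8}. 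The summation of these weights — using $\sigma_1,\sigma_2\le\frac{n-1}{p}$ for the paraproduct terms and $\sigma_1+\sigma_2>(n-1)\max(0,\frac2p-1)$ for the remainder terms, together with Young's inequality for series convolution — yields a bound $c_{k,j}2^{-k(\sigma_1+\sigma_2-\frac{n-1}{p})}2^{-j/2}$ with $\|c_{k,j}\|_{\ell^1(\mathbb{Z}^2)}=1$. (In the remainder case the intermediate step mixes $L^p_h(L^2_v)$ and $L^2$ norms, but after the Bernstein reductions one again lands on an inequality of precisely the above form.)

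To upgrade this to the Chemin-Lerner setting, I would simply apply $\|\cdot\|_{L^\rho_T}$ to the displayed block inequality. By Minkowski's inequality the $L^\rho_T$ norm passes inside the countable sum; and since the weights $a$ do not depend on $t$ and $\frac1\rho=\frac1{\rho_1}+\frac1{\rho_2}$, Hölder's inequality in time factorizes every summand,
$$\big\|\,\|\Delta_{k'',j''}f_\Phi(t)\|_{L^p_h(L^2_v)}\,\|\Delta_{k',j'}g_\Phi(t)\|_{L^p_h(L^2_v)}\,\big\|_{L^\rho_T} \le \|\Delta_{k'',j''}f_\Phi\|_{L^{\rho_1}_T(L^p_h(L^2_v))}\,\|\Delta_{k',j'}g_\Phi\|_{L^{\rho_2}_T(L^p_h(L^2_v))}.$$
Thus every single-block factor $\|\Delta_{\cdot,\cdot}f_\Phi\|_{L^p_h(L^2_v)}$, resp. $\|\Delta_{\cdot,\cdot}g_\Phi\|_{L^p_h(L^2_v)}$, occurring in the proof of Lemma \ref{p2.8} is replaced by the corresponding $\widetilde L^{\rho_1}_T$-, resp. $\widetilde L^{\rho_2}_T$-block norm, while the remaining bookkeeping — the geometric-weight summations and the final Young-type convolution producing an $\ell^1(\mathbb{Z}^2)$-normalized sequence — is word for word unchanged. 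Summing in $\ell^1(\mathbb{Z}^2)$ against the weights $2^{k(\sigma_1+\sigma_2-\frac{n-1}{p})}2^{j/2}$ then gives
$$\|(fg)_\Phi\|_{\widetilde L^\rho_T(\dot B^{\sigma_1+\sigma_2-\frac{n-1}{p},\frac12}_{p,1})}\lesssim \|f_\Phi\|_{\widetilde L^{\rho_1}_T(\dot B^{\sigma_1,\frac12}_{p,1})}\|g_\Phi\|_{\widetilde L^{\rho_2}_T(\dot B^{\sigma_2,\frac12}_{p,1})},$$
which is the asserted inequality.

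There is no genuine obstacle here — the analytic content lives entirely in Lemma \ref{p2.8} — but one point must be respected carefully: by Definition \ref{D2.3} the time-Lebesgue norm sits \emph{inside} the $\ell^r$ summation over dyadic blocks, so one must take $\|\cdot\|_{L^\rho_T}$ of the single-block estimates \emph{before} performing any dyadic summation. Minkowski's inequality is what licenses pulling $L^\rho_T$ through the sums, and the time-independence of the geometric weights is what makes Hölder in time factorize cleanly; once these two observations are recorded, the proof is a verbatim transcription of that of Lemma \ref{p2.8}, so it is legitimate to omit the routine computations, consistent with the statement being a ``direct consequence''.
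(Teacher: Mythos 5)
Your proof is correct and is precisely the argument the paper leaves implicit when it calls Lemma~\ref{p2.9} a ``direct consequence'' of Lemma~\ref{p2.8}: apply $L^\rho_T$ to the pointwise-in-time block estimates from the Bony decomposition, pull it inside the dyadic sums by Minkowski, factor each summand by H\"older in time using $\frac{1}{\rho}=\frac{1}{\rho_1}+\frac{1}{\rho_2}$ and the time-independence of the geometric weights, and then run the identical summation bookkeeping. Your remark that the $L^\rho_T$ norm must be taken at the single-block level, before any summation over $(k,j)$, is exactly the point that makes the reduction consistent with Definition~\ref{D2.3}.
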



\section{ Estimates for the re-scaled system}
This section is devoted to obtaining the $a\ priori$ estimate for the following  system
\begin{equation}\label{e3.1}
\left\{
 \begin{array}{rlll}
 \partial_tv^h+ v\cdot\nabla {v}^h+D_\epsilon^s v^h+\nabla_h{q}&=&0,\\
 \partial_tv^n+{v}\cdot\nabla v^n+D_\epsilon^sv^n+\eps^2 \partial_nq&=&0,\\
  \hbox{div}\,{v}&=&0,\\
  {v}(0)&=&{v}_0(x).
   \end{array}
  \right.
\end{equation}
The pressure ${q}$ can be computed by the formula
$$-\Delta_\eps{q}=\sum\limits_{i,j}\partial_i\partial_j({v}^i{v}^j).$$
Due to the divergence free condition, the pressure can be split into the following three parts
\begin{equation}\label{e3.2}
\left\{
 \begin{array}{llll}
q^1=(-\Delta_\eps)^{-1}\sum\limits_{i,j=1}^{n-1}\partial_i\partial_j(v^iv^j),\\
q^2=2(-\Delta_\eps)^{-1}\sum\limits_{i=1}^{n-1}\partial_i\partial_n(v^iv^n),\\
q^3=-2(-\Delta_\eps)^{-1}\partial_n(v^n\mathrm{div_h}{v}^h).
 \end{array}
  \right.
\end{equation}
It is worthwhile to note that there will lose one vertical derivative owing to the term $v^n\partial_n {v}^h$ and pressure terms ${q}^2,{q}^3$ which appear in the equation on ${v}^h$. Thus, we assume that the initial data is analytic in the vertical variable. This method was introduced in \cite{Ch} to compensate the losing derivative in $x_n$.
Therefore, we introduce two key quantities which we want to control in order to obtain the global bound of ${v}$ in a certain space. We
define the function $\theta(t)$ by
\begin{equation}\label{e3.3}
\begin{split}
\theta(t)=\int_0^t \|{v}^n_{\Phi}(\tau)\|_{\dot{B}^{\frac{n-1}{p},\frac{1}{2}}_{p,1}}d\tau,
\end{split}
\end{equation}
and denote
\begin{equation}\label{e3.4}
\begin{split}
\Psi(t)=&\|{v}_\Phi\|_{\widetilde{L}_t^\infty(\dot{B}_{p,1}^{\frac{n-1}{p}+1-s,\frac{1}{2}})}
+\|{v}_\Phi\|_{L_t^1(\dot{B}_{p,1}^{\frac{n-1}{p}+1,\frac{1}{2}})}\\
&+\int_0^t \|v^n_\Phi\|_{\dot{B}_{p,1}^{\frac{n-1}{p}+1,\frac{1}{2}}}\|\partial_nv^h_\Phi\|_{\dot{B}_{p,1}^{\frac{n-1}{p}+1-s,\frac{1}{2}}} d\tau,\\
\Psi(0)=&\|e^{\alpha D_n }v_0\|_{\dot{B}_{p,1}^{\frac{n-1}{p}+1-s,\frac{1}{2}}}.
\end{split}
\end{equation}
The phase function $\Phi(t,D_n)$ is defined by
\begin{equation}\label{e3.5}
\begin{split}
\Phi(t,\xi_n)=(\alpha-\lambda\theta(t))|\xi_n|,
\end{split}
\end{equation}
for some $\lambda $ that will be chosen later on, $\alpha$ is a positive number. Obviously, we need to ensure that $\theta(t)< \frac{\alpha}{\lambda}$ which
implies the subadditivity of $\Phi$.

The following lemma provides the $a\ priori$ estimate of ${v}_\Phi$ in the anisotropic Besov spaces, which is the key bilinear estimate.
\begin{lem}\label{l3.2}
There exist two constants $\lambda_0$ and $C_1$ such that for any $\lambda>\lambda_0$ and $t$ satisfying $\theta(t)\leq \frac{\alpha}{2\lambda}$, we have
$$\Psi(t)\leq C_1\Psi(0) +  C_1\Psi(t)^2.$$
\end{lem}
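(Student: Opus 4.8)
The plan is to run a Duhamel/energy argument on the rescaled system \eqref{e3.1}, block-by-block in the anisotropic Littlewood–Paley decomposition $\Delta_{k,j}$, and to carry the analytic weight $e^{\Phi(t,D_n)}$ through every term so that the $\partial_n$-derivative losses are compensated. Concretely, I would apply $\Delta_{k,j}$ to the equations for $v^h_\Phi$ and $v^n_\Phi$, multiply by $\Delta_{k,j}v^\bullet_\Phi$, integrate in $x$, and exploit the smoothing $e^{-(t-\tau)D_\epsilon^s}$: on each horizontal ring $|\xi_h|\sim 2^k$ one has $D_\epsilon^s \gtrsim 2^{ks}$, which furnishes the gain taking $\dot B^{\frac{n-1}{p}+1-s,\frac12}$ to $L^1_t(\dot B^{\frac{n-1}{p}+1,\frac12})$ and producing the first two pieces of $\Psi(t)$. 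The weight contributes the crucial extra term: since $\partial_t\Phi(t,\xi_n) = -\lambda\dot\theta(t)|\xi_n|$ with $\dot\theta(t)=\|v^n_\Phi\|_{\dot B^{\frac{n-1}{p},\frac12}_{p,1}}$, differentiating $e^{\Phi}$ in time produces a good (negative) term $\lambda\dot\theta(t)\,2^{j}\|\Delta_{k,j}v^h_\Phi\|$ on each vertical ring $|\xi_n|\sim 2^j$; after summing in $(k,j)$ against the weights $2^{k(\frac{n-1}{p}+1-s)}2^{j/2}$ this is exactly the third term $\int_0^t\|v^n_\Phi\|_{\dot B^{\frac{n-1}{p}+1,\frac12}}\|\partial_nv^h_\Phi\|_{\dot B^{\frac{n-1}{p}+1-s,\frac12}}\,d\tau$ in $\Psi(t)$, which can then \emph{absorb} losing-derivative terms coming from the nonlinearity once $\lambda$ is large.

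The nonlinear terms I must bound in $L^1_t(\dot B^{\frac{n-1}{p}+1-s,\frac12}_{p,1})$ are $(v^h\cdot\nabla_h v^h)_\Phi$, $(v^n\partial_n v^h)_\Phi$, and the three pressure pieces $\nabla_h q^1,\nabla_h q^2,\nabla_h q^3$ from \eqref{e3.2} (plus the analogous terms in the $v^n$ equation, including $\epsilon^2\partial_n q$). For the horizontal convection term I use Lemma~\ref{p2.9} with $(\sigma_1,\sigma_2)=(\frac{n-1}{p}+1-s,\frac{n-1}{p})$: the hypotheses $1\le p<n-1$, $1\le s<\min(n-1,\tfrac{2(n-1)}{p})$ guarantee $\sigma_1,\sigma_2\le\frac{n-1}{p}$ and $\sigma_1+\sigma_2>(n-1)\max(0,\tfrac2p-1)$, yielding $\|v^h\cdot\nabla_h v^h\|\lesssim\|v^h_\Phi\|_{\widetilde L^\infty_t(\dot B^{\frac{n-1}{p}+1-s,\frac12})}\|v^h_\Phi\|_{L^1_t(\dot B^{\frac{n-1}{p}+1,\frac12})}\lesssim\Psi(t)^2$. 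For $(v^n\partial_n v^h)_\Phi$ I split $\partial_n v^h$ off and write $\|(v^n\partial_n v^h)_\Phi\|_{L^1_t(\dot B^{\frac{n-1}{p}+1-s,\frac12})}\lesssim\int_0^t\|v^n_\Phi\|_{\dot B^{\frac{n-1}{p},\frac12}}\|\partial_n v^h_\Phi\|_{\dot B^{\frac{n-1}{p}+1-s,\frac12}}\,d\tau$ via the product law, and this is $\le\Psi(t)^2$ by the very definition of the third term of $\Psi(t)$ (noting $\|\partial_n v^h_\Phi\|\le\|v^h_\Phi\|_{\dot B^{\frac{n-1}{p}+1,\frac12}}$ on vertical frequencies). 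For the pressures, I use $\nabla_h(-\Delta_\epsilon)^{-1}$ boundedness on horizontal frequencies to reduce $q^3$-type terms to $\|v^n\,\mathrm{div}_h v^h\|_{L^1_t(\dot B^{\frac{n-1}{p}-s,\frac12})}$, handled by Lemma~\ref{p2.9} with $(\sigma_1,\sigma_2)=(\frac{n-1}{p}-s,\frac{n-1}{p})$ — here the constraint $\sigma_1+\sigma_2=\frac{2(n-1)}{p}-s>(n-1)\max(0,\tfrac2p-1)$ is exactly the requirement $s<\tfrac{2(n-1)}{p}$, and $s<n-1$ keeps $\sigma_1\le\frac{n-1}{p}$; $q^1,q^2$ are similar or easier. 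The $\epsilon^2\partial_n q$ term in the $v^n$ equation is controlled uniformly in $\epsilon$ because the two derivatives $\epsilon^2\partial_n^2$ combine with $(-\Delta_\epsilon)^{-1}$ to give an $O(1)$ Fourier multiplier.

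Assembling: for each block, Duhamel plus the $D_\epsilon^s$-smoothing plus the weight-derivative term gives, after summation against $2^{k(\frac{n-1}{p}+1-s)}2^{j/2}$ and use of $\theta(t)\le\frac{\alpha}{2\lambda}$ (so that $\Phi$ is genuinely subadditive and Lemmas~\ref{p2.8}–\ref{p2.9} apply to the $\Phi$-weighted products), an inequality of the shape $\Psi(t)\le C_1\Psi(0)+C_1\Psi(t)^2+\frac{C}{\lambda}\Psi(t)^2$; choosing $\lambda_0$ so that $\frac{C}{\lambda_0}\le 1$ absorbs the residual losing-derivative contribution into $C_1\Psi(t)^2$, giving the claimed bound. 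The main obstacle is the third item: correctly tracking the weight through the Duhamel integral for $\partial_n v^h_\Phi$ — i.e. justifying the chain of inequalities sketched in Step~4 of the introduction, where $e^{\Phi(t,D_n)-\Phi(\tau,D_n)}=e^{-\lambda\int_\tau^t\dot\theta\,D_n}$ is paired with the time integration $\int_\tau^t e^{-c\lambda\int\dot\theta\,2^j}2^j\dot\theta\,d\tau'$ to extract the factor $\frac1\lambda$. This is where the positivity of $\dot\theta$, the subadditivity of $\Phi$ on the product (so that $(fg)_\Phi$ is dominated by $f_\Phi,g_\Phi$ in the sense of Lemma~\ref{p2.8}), and the sharp frequency localizations all have to fit together; everything else is a bookkeeping exercise in anisotropic paraproduct estimates.
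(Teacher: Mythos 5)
Your proposal captures the broad strategy (anisotropic dyadic blocks, $\dot B^{\frac{n-1}{p}+1-s,\frac12}_{p,1}$ scaling, the weight $e^{\Phi(t,D_n)}$, the anisotropic product law of Lemma~\ref{p2.9}, and the pressure decomposition), and your verification of the index constraints against $1\le p<n-1$, $1\le s<\min(n-1,\frac{2(n-1)}{p})$ is correct. However, there are two genuine gaps.

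First, the framing ``multiply by $\Delta_{k,j}v^\bullet_\Phi$ and integrate in $x$'' is an $L^2$ energy identity; it does not hold in $L^p_h(L^2_v)$ for $p\ne 2$ (the paper explicitly disclaims this in Remark~1.3). The actual argument in Section~3 is purely a Duhamel computation: each block $\Delta_{k,j}v^h_\Phi$ is represented by \eqref{e3.6}, the damping factor $e^{-c(2^{ks}+\epsilon^s2^{js})(t-\tau)}$ and the weight factor $e^{-c\lambda 2^j\int_\tau^t\dot\theta}$ are read off, and Young's inequality in time is applied. There is no ``good term'' arising from $\partial_t e^{\Phi}$ in this framework; the weight manifests only as the multiplicative decay factor $e^{-c\lambda 2^j\int_\tau^t\dot\theta\,dt'}$ inside the Duhamel integral. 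Consequently the claim that differentiating $e^\Phi$ ``produces exactly the third term of $\Psi(t)$'' misidentifies the mechanism: that term is not an output of the weight derivative, it is a \emph{defined} quantity, and the whole point of the lemma is to \emph{control} it. The paper does so by a separate step (around \eqref{1}--\eqref{6}): it writes Duhamel for $\partial_n v^h_\Phi$, multiplies the resulting block estimate by $\dot\theta(\tau)$, integrates in $\tau$, applies Fubini, and uses $\int_{t'}^t e^{-c\lambda 2^j\int_{t'}^\tau\dot\theta}\,2^j\dot\theta(\tau)\,d\tau\lesssim\frac{1}{\lambda}$ to obtain a factor $\frac1\lambda$ in front of each contribution. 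This is also where $\lambda_0$ genuinely enters: the resulting inequality is of the shape $\Psi(t)\lesssim\Psi(0)+\frac1\lambda\Psi(t)+\Psi(t)^2$ (equation \eqref{e3.35}), and it is the \emph{linear} term $\frac1\lambda\Psi(t)$ that must be absorbed into the left side by taking $\lambda$ large. In your version the residual is $\frac{C}{\lambda}\Psi(t)^2$, which is already quadratic and needs no largeness of $\lambda$ at all, so you have effectively eliminated the reason the hypothesis $\lambda>\lambda_0$ appears in the statement.

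Second, the inequality ``$\|\partial_n v^h_\Phi\|\le\|v^h_\Phi\|_{\dot B^{\frac{n-1}{p}+1,\frac12}}$ on vertical frequencies'' is false. The operator $\partial_n$ contributes a factor $2^j$ (vertical), so $\|\partial_n v^h_\Phi\|_{\dot B^{\frac{n-1}{p}+1-s,\frac12}_{p,1}}\sim\|v^h_\Phi\|_{\dot B^{\frac{n-1}{p}+1-s,\frac32}_{p,1}}$, whereas $\dot B^{\frac{n-1}{p}+1,\frac12}_{p,1}$ differs in the \emph{horizontal} index. Neither space embeds in the other, precisely because high vertical frequencies see no horizontal smoothing. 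This is the derivative-loss problem the analytic weight is designed to overcome; trading it for a horizontal derivative does not work, and if it did one would not need analyticity in $x_n$ at all. Removing this step and replacing it with the Fubini-plus-weight estimate for $\int_0^t\dot\theta(\tau)\|\Delta_{k,j}\partial_n v^h_\Phi(\tau)\|_{L^p_h(L^2_v)}\,d\tau$ would repair the argument.
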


\subsection{Estimates on the horizontal component ${v}^h$}
According to the definition of ${v}_\Phi^h$, we find that in each dyadic block,  it verifies the following equation
\begin{align}\label{e3.6}
\begin{split}
{\Delta}_{k,j}v_\Phi^h(t,x)&=e^{-tD_\eps^s+\Phi(t,D_n)}{\Delta}_{k,j}v^h_0\\
&-\int_0^te^{-(t-\tau)D_\eps^s}e^{-\lambda D_n
\int_\tau^t\dot\theta(t')dt'}{\Delta}_{k,j}(v\cdot\nabla v^h)_\Phi(\tau)d\tau\\
&-\int_0^te^{-(t-\tau)D_\eps^s}e^{-\lambda D_n \int_\tau^t\dot\theta(t')dt'}\nabla_h{\Delta}_{k,j}q_\Phi(\tau)d\tau.
\end{split}
\end{align}
Taking the $L_h^p(L_v^2)$ norm, we deduce that
 \begin{align}\label{e3.8}
\begin{split}
\|{\Delta}_{k,j}v_\Phi^h\|_{L_h^p(L_v^2)}&\lesssim  e^{-c(2^{ks}+\eps^s2^{js})t}\|{\Delta}_{k,j}e^{\alpha D_n }v_0^h\|_{L_h^p(L_v^2)}\\
&+\int_0^te^{-c(2^{ks}+\eps^s2^{js})(t-\tau)}e^{-c\lambda2^{j}\int_\tau^t\dot\theta(t')dt'}\|{\Delta}_{k,j}(v\cdot\nabla v^h)_\Phi\|_{L_h^p(L_v^2)}d\tau\\
&+\int_0^te^{-c(2^{ks}+\eps^s2^{js})(t-\tau)}e^{-c\lambda2^{j}\int_\tau^t\dot\theta(t')dt'}\|\nabla_h{\Delta}_{k,j}q_\Phi\|_{L_h^p(L_v^2)}d\tau\\
&\overset{\mathrm{def}}{=}I_1+I_2+I_3.
\end{split}
\end{align}

We first estimate the linear term $I_1$. In fact, we have
 \begin{align}\label{e3.9}
\begin{split}
\| I_1\|_{L^\infty_t}+2^{ks}\| I_1\|_{L^1_t}&\lesssim \|{\Delta}_{k,j}e^{\alpha  D_n }v_0^h\|_{L_h^p(L_v^2)}\\
&\lesssim  c_{k,j}2^{-k(\frac{n-1}{p}+1-s)}2^{-\frac{1}{2}j}\|e^{\alpha D_n }v_0^h\|_{\dot{B}_{p,1}^{\frac{n-1}{p}+1-s,\frac{1}{2}}},
\end{split}
\end{align}
where $\{c_{k,j}\}_{(k,j)\in\mathbb{Z}^2}$ is a two dimensional sequence satisfying $\|c_{k,j}\|_{l^1(\mathbb{Z}^2)}=1.$

The term $I_{2}$ can be rewritten as
$$\aligned
I_{2}&\lesssim \int_0^te^{-c2^{ks}(t-\tau)}\|{\Delta}_{k,j}(v\cdot\nabla v^h)_\Phi\|_{L^p_h(L^2_v)}d\tau.
\endaligned$$
By Young's inequality, we have
$$\aligned
\| I_{2}\|_{L^\infty_t}+ 2^{ks}\|I_{2}\|_{L^1_t} &\lesssim \|\Delta_{k,j}(v\cdot\nabla v^h)_\Phi\|_{L^1_t(L_h^p(L_v^2))}\\
&\lesssim  c_{k,j}2^{-k(\frac{n-1}{p}+1-s)}2^{-\frac{1}{2}j}\|(v\cdot\nabla v^h)_\Phi\|_{L_t^1(\dot{B}_{p,1}^{\frac{n-1}{p}+1-s,\frac{1}{2}})}.
\endaligned$$
Thus, we can get by Lemma \ref{p2.8} and \ref{p2.9}  that
 \begin{align}\nonumber
\begin{split}
\| I_{2}\|_{L^\infty_t}+ 2^{ks}\|I_{2}\|_{L^1_t}&\lesssim c_{k,j}2^{-k(\frac{n-1}{p}+1-s)}2^{-\frac{1}{2}j}\|v^h_\Phi\|_{\widetilde{L}_t^\infty(\dot{B}_{p,1}^{\frac{n-1}{p}+1-s,\frac{1}{2}})}
\|v^h_\Phi\|_{L_t^1(\dot{B}_{p,1}^{\frac{n-1}{p}+1,\frac{1}{2}})}\\
&+ c_{k,j}2^{-k(\frac{n-1}{p}+1-s)}2^{-\frac{1}{2}j}\int_0^t\|v^n_\Phi\|_{ \dot{B}_{p,1}^{\frac{n-1}{p},\frac{1}{2}}} \|\partial_nv^h_\Phi\|_{ \dot{B}_{p,1}^{\frac{n-1}{p}+1-s,\frac{1}{2}}}d\tau.
\end{split}
\end{align}

Now we are left with the study of the pressure term $I_3$.
The pressure can be split into  ${q}={q}^1+{q}^2+{q}^3$ with ${q}^1,{q}^2,{q}^3$ defined in (\ref{e3.2}).
For convenience, we denote that
$$I_{31}=\int_0^te^{-c(2^{ks}+\eps^s2^{js})(t-\tau)}e^{-c\lambda2^{j}\int_\tau^t\dot\theta(t')dt'}\|\nabla_h{\Delta}_{k,j}q^1_\Phi\|_{L^p_h(L^2_v)}d\tau,$$ $$I_{32}=\int_0^te^{-c(2^{ks}+\eps^s2^{js})(t-\tau)}e^{-c\lambda2^{j}\int_\tau^t\dot\theta(t')dt'}\|\nabla_h{\Delta}_{k,j}q^2_\Phi\|_{L^p_h(L^2_v)}d\tau,$$
$$I_{33}=\int_0^te^{-c(2^{ks}+\eps^s2^{js})(t-\tau)}e^{-c\lambda2^{j}\int_\tau^t\dot\theta(t')dt'}\|\nabla_h{\Delta}_{k,j}q^3_\Phi\|_{L^p_h(L^2_v)}d\tau.$$
Hence, using the fact that
$(-\Delta_\eps)^{-1}\partial_i\partial_j$ is a  bounded operator applied for frequency localized
functions in $L^p_h(L^2_v)$ when $i,j=1,2,\cdots,n-1$, we get
$$\|\nabla_h {\Delta}_{k,j}q^1_\Phi\|_{L^p_h(L^2_v)}\lesssim \|{\Delta}_{k,j}(v^h\cdot \nabla_hv^h)\|_{L^p_h(L^2_v)}.$$
By the same method as in the estimate of  $I_{2}$, we have
\begin{align}\label{e3.14}
\begin{split}
&\|I_{31}\|_{L^\infty_t}+ 2^{ks}\|I_{31}\|_{L^1_t}\lesssim c_{k,j}2^{-k(\frac{n-1}{p}+1-s)}2^{-\frac{1}{2}j}\|v^h_\Phi\|_{\widetilde{L}_t^\infty(\dot{B}_{p,1}^{\frac{n-1}{p}+1-s,\frac{1}{2}})}
\|v^h_\Phi\|_{L_t^1(\dot{B}_{p,1}^{\frac{n-1}{p}+1,\frac{1}{2}})}.
\end{split}
\end{align}
Noting that
$$\nabla_hq^2=2(-\Delta_\eps)^{-1}\nabla_h\partial_i(v^n\partial_nv^h-v^h\mathrm{div}_hv^h),$$
and as in the estimate of $I_2$, it holds that
\begin{align}
\begin{split}
\|I_{32}\|_{L_t^\infty}+2^{ks}\|I_{32}\|_{L_t^1}&\lesssim c_{k,j}2^{-k(\frac{n-1}{p}+1-s)}2^{-\frac{1}{2}j}\int_0^t\|v^n_\Phi\|_{ \dot{B}_{p,1}^{\frac{n-1}{p},\frac{1}{2}}} \|\partial_nv^h_\Phi\|_{ \dot{B}_{p,1}^{\frac{n-1}{p}+1-s,\frac{1}{2}}}d\tau\\
&+c_{k,j}2^{-k(\frac{n-1}{p}+1-s)}2^{-\frac{1}{2}j}
\|v^h_\Phi\|_{L_t^1(\dot{B}_{p,1}^{\frac{n-1}{p}+1,\frac{1}{2}})}\|v^h_\Phi\|_{\widetilde{L}_t^\infty(\dot{B}_{p,1}^{\frac{n-1}{p}+1-s,\frac{1}{2}})}.
\end{split}
\end{align}

Using
$$\nabla_hq^3=2(-\Delta_\eps)^{-1}\nabla_h(\mathrm{div}_hv^h\mathrm{div}_hv^h-v^n\partial_n\mathrm{div}_hv^h),$$
we write $I_{33}$ as follows
$$\aligned
I_{33}&\lesssim 2^{-k}\int_0^te^{-c2^{ks}(t-\tau)}\Big(\|\Delta_{k,j}(\mathrm{div}_hv^h \mathrm{div}_hv^h)_\Phi\|_{L^p_h(L^2_v)}+\|\Delta_{k,j}(v^n\partial_n\mathrm{div}_hv^h)_\Phi\|_{L^p_h(L^2_v)}\Big)d\tau.
\endaligned$$
By Young's inequality and Lemma \ref{p2.9}, as $1\leq s < \min\{n-1,2\frac{n-1}{p}\}$, we have
\begin{align}
\begin{split}\label{e3.15}
\|I_{33}\|_{L_t^\infty}+ 2^{ks}\|I_{33}\|_{L_t^1}&\lesssim c_{k,j}2^{-k(\frac{n-1}{p}+1-s)}2^{-\frac{1}{2}j}\|v^h_\Phi\|_{L_t^\infty(\dot{B}_{p,1}^{\frac{n-1}{p}+1-s,\frac{1}{2}})}\|v^h_\Phi\|_{L_t^1(\dot{B}_{p,1}^{\frac{n-1}{p}+1,\frac{1}{2}})}\\
&+c_{k,j}2^{-k(\frac{n-1}{p}+1-s)}2^{-\frac{1}{2}j}\int_0^t\|v^n_\Phi\|_{ \dot{B}_{p,1}^{\frac{n-1}{p},\frac{1}{2}}} \|\partial_nv^h_\Phi\|_{ \dot{B}_{p,1}^{\frac{n-1}{p}+1-s,\frac{1}{2}}}d\tau.
\end{split}
\end{align}

Now we are going to estimate the key quantity
$$\int_0^t\|v^n_\Phi\|_{ \dot{B}_{p,1}^{\frac{n-1}{p},\frac{1}{2}}} \|\partial_nv^h_\Phi\|_{ \dot{B}_{p,1}^{\frac{n-1}{p}+1-s,\frac{1}{2}}}d\tau.$$
According to (\ref{e3.6}) , we find that in each dyadic block $\partial_n{v}_\Phi^h$ verifies
\begin{align}\label{1}
\begin{split}
{\Delta}_{k,j}\partial_nv_\Phi^h(\tau,x)&=e^{-\tau D_\epsilon^s+\Phi(\tau,D_n)}{\Delta}_{k,j}\partial_nv^h_0\\
&-\int_0^\tau e^{-(\tau-t')D_\epsilon^s}e^{-\lambda D_n
\int_{t'}^\tau\dot\theta(t'')dt''}{\Delta}_{k,j}\partial_n(v\cdot\nabla v^h)_\Phi(t')dt'\\
&-\int_0^\tau e^{-(\tau-t')D_\epsilon^s}e^{-\lambda D_n \int_{t'}^\tau\dot\theta(t'')dt''}\nabla_h{\Delta}_{k,j}\partial_nq_\Phi(t')dt'.
\end{split}
\end{align}
Taking the $L^p_h(L^2_v)$ norm on both sides of (\ref{1}), we have
 \begin{align}\label{2}
\begin{split}
\|{\Delta}_{k,j}\partial_nv_\Phi^h\|_{L^p_h(L^2_v)}&\lesssim  e^{-c(2^{ks}+\eps^s2^{js})\tau}e^{-c\lambda2^{j}\int_{0}^\tau \dot\theta(t'')d{t''}}\|{\Delta}_{k,j}e^{\alpha  D_n }\partial_nv_0^h\|_{L^p_h(L^2_v)}\\
&+\int_0^\tau e^{-c(2^{ks}+\eps^s2^{js})(\tau-t')}e^{-c\lambda2^{j}\int_{t'}^\tau \dot\theta(t'')d{t''}}\|{\Delta}_{k,j}\partial_n(v\cdot\nabla v^h)_\Phi\|_{L^p_h(L^2_v)}dt'\\
&+\int_0^\tau e^{-c(2^{ks}+\eps^s2^{js})(\tau-t')}e^{-c\lambda2^{j}\int_{t'}^\tau\dot\theta(t'')d{t''}}\|\nabla_h{\Delta}_{k,j}\partial_nq_\Phi\|_{L^p_h(L^2_v)}dt'.
\end{split}
\end{align}
For fixed $k,j$, multiplying the (\ref{2}) by $\dot \theta(\tau)$ and integrating over $(0,t)$, one can have
 \begin{align}\nonumber
\begin{split}
\int_0^t\dot\theta(\tau)\|{\Delta}_{k,j}\partial_nv_\Phi^h\|_{L^p_h(L^2_v)}d\tau
&\lesssim\int_0^te^{-c\lambda2^{j}\int_{0}^\tau \dot\theta(t'')d{t''}}\dot\theta(\tau)\|e^{\alpha D_n }\partial_n{\Delta}_{k,j}v_0^h\|_{L^p_h(L^2_v)}d\tau\\
&+\int_0^t\int_0^\tau e^{-c\lambda2^{j}\int_{t'}^\tau \dot\theta(t'')d{t''}}2^j\dot\theta(\tau)\|{\Delta}_{k,j}(v\cdot\nabla v^h)_\Phi\|_{L^p_h(L^2_v)}dt'd\tau\\
&+\int_0^t\int_0^\tau e^{-c\lambda2^{j}\int_{t'}^\tau\dot\theta(t'')d{t''}}2^j\dot\theta(\tau)\|\nabla_h{\Delta}_{k,j}q_\Phi\|_{L^p_h(L^2_v)}dt'd\tau\\
&\overset{\mathrm{def}}{=}I_4+I_5+I_6.
\end{split}
\end{align}
The term $I_4$ containing initial data can be bounded by
 \begin{align}\label{3}
\begin{split}
I_4&\lesssim \int_0^t e^{-c\lambda2^{j}\int_{0}^\tau \dot\theta(t'')d{t''}}2^j\dot\theta(\tau)d\tau\|{\Delta}_{k,j}e^{\alpha  D_n }v_0^h\|_{L^p_h(L^2_v)}\\
&\lesssim  \frac{1}{\lambda}c_{k,j}2^{-k(\frac{n-1}{p}+1-s)}2^{-\frac{1}{2}j}\|e^{\alpha D_n }v_0^h\|_{\dot{B}_{p,1}^{\frac{n-1}{p}+1-s,\frac{1}{2}}}.
\end{split}
\end{align}

By Fubini's theorem, the term $I_5$ can be rewritten as
$$\aligned
I_{5}&\lesssim \int_0^t \int_{t'}^t e^{-c\lambda2^{j}\int_{t'}^\tau \dot\theta(t'')dt''}2^j\dot\theta(\tau)d\tau \|{\Delta}_{k,j}(v\cdot\nabla v^h)_\Phi\|_{L^p_h(L^2_v)}dt' \\
&\lesssim \frac{1}{\lambda} \int_0^t \|{\Delta}_{k,j}(v\cdot\nabla v^h)_\Phi\|_{L^p_h(L^2_v)}dt'\\
&\lesssim \frac{1}{\lambda}c_{k,j}2^{-k(\frac{n-1}{p}+1-s)}2^{-\frac{1}{2}j}\|(v\cdot\nabla v^h)_\Phi\|_{L^1_t(\dot{B}_{p,1}^{\frac{n-1}{p}+1-s,\frac{1}{2}})}.
\endaligned$$
Thus, we can get by Lemma \ref{p2.8} and \ref{p2.9} that
 \begin{align}\nonumber
\begin{split}
I_{5}&\lesssim \frac{1}{\lambda}c_{k,j}2^{-k(\frac{n-1}{p}+1-s)}2^{-\frac{1}{2}j}\|v^h_\Phi\|_{\widetilde L^\infty_t(\dot{B}_{p,1}^{\frac{n-1}{p}+1-s,\frac{1}{2}})}\|v^h_\Phi\|_{L^1_t(\dot{B}_{p,1}^{\frac{n-1}{p}+1,\frac{1}{2}})}\\
&+\frac{1}{\lambda} c_{k,j} 2^{-k(\frac{n-1}{p}+1-s)}2^{-\frac{1}{2}j} \int_0^t \|v^n_\Phi\|_{\dot{B}_{p,1}^{\frac{n-1}{p},\frac{1}{2}}}\|\partial_nv^h_\Phi\|_{\dot{B}_{p,1}^{\frac{n-1}{p}+1-s,\frac{1}{2}}}dt'.
\end{split}
\end{align}

As for $I_6$,
for convenience, we denote that
$$I_{61}=\int_0^t\int_0^\tau e^{-c\lambda2^{j}\int_{t'}^\tau \dot\theta(t'')dt''}2^j\dot\theta(\tau)\|\nabla_h{\Delta}_{k,j}q^1_\Phi\|_{L^p_h(L^2_v)}(t')dt'd\tau,$$
$$I_{62}=\int_0^t \int_0^\tau e^{-c\lambda2^{j}\int_{t'}^\tau \dot\theta(t'')dt''}2^j\dot \theta(\tau)\|\nabla_h{\Delta}_{k,j}q^2_\Phi\|_{L^p_h(L^2_v)}(t')dt' d\tau,$$
$$I_{63}=\int_0^t \int_0^\tau e^{-c\lambda2^{j}\int_{t'}^\tau\dot\theta(t'')dt''}2^j\dot \theta(\tau)\|\nabla_h{\Delta}_{k,j}q^3_\Phi\|_{L^p_h(L^2_v)}(t')dt'd\tau.$$
By the same method as in the estimate of  $I_{5}$, we have
\begin{equation}\label{5}
\begin{split}
I_{61}+I_{62}&\lesssim \frac{1}{\lambda}c_{k,j}2^{-k(\frac{n-1}{p}+1-s)}2^{-\frac{1}{2}j}\|v^h_\Phi\|_{\widetilde{L}_t^\infty(\dot{B}_{p,1}^{\frac{n-1}{p}+1-s,\frac{1}{2}})}
\|v^h_\Phi\|_{{L}_t^1(\dot{B}_{p,1}^{\frac{n-1}{p}+1,\frac{1}{2}})}\\
&+\frac{1}{\lambda} c_{k,j} 2^{-k(\frac{n-1}{p}+1-s)}2^{-\frac{1}{2}j} \int_0^t \|v^n_\Phi\|_{\dot{B}_{p,1}^{\frac{n-1}{p},\frac{1}{2}}}\|\partial_nv^h_\Phi\|_{\dot{B}_{p,1}^{\frac{n-1}{p}+1-s,\frac{1}{2}}}dt'.
\end{split}
\end{equation}
Finally,
 $I_{63}$ can be estimated as follows
$$\aligned
I_{63}&\lesssim\int_0^t\int_0^\tau e^{-c\lambda2^{j}\int_{t'}^\tau \dot\theta(t'')dt''}2^j \dot \theta(\tau)
\|{\Delta}_{k,j}(-\Delta_\eps)^{-1}\nabla_h\partial_n(v^n  \mathrm{div}_hv^h)_\Phi\|_{L^p_h(L^2_v)}(t')dt'd\tau\\
&\lesssim2^{-k}\int_0^t\int_{t'}^\tau e^{-c\lambda2^{j}\int_{t'}^\tau \dot\theta(t'')dt''}2^j \dot \theta(\tau)
d\tau \|{\Delta}_{k,j}\partial_n(v^n\mathrm{div}_hv^h)_\Phi\|_{L^p_h(L^2_v)}(t')dt'.
\endaligned$$
Thus, we can obtain that
\begin{align}
\begin{split}\label{6}
I_{63}&\lesssim \frac{1}{\lambda}c_{k,j}2^{-k(\frac{n-1}{p}+1-s)}2^{-\frac{1}{2}j}\|\partial_n(v^n   \mathrm{div}_hv^h)_\Phi\|_{L^1_t(\dot{B}_{p,1}^{\frac{n-1}{p}-s,\frac{1}{2}})}\\
&\lesssim \frac{1}{\lambda}c_{k,j}2^{-k(\frac{n-1}{p}+1-s)}2^{-\frac{1}{2}j}\|v^h_\Phi\|_{\widetilde{L}_t^\infty(\dot{B}_{p,1}^{\frac{n-1}{p}+1-s,\frac{1}{2}})}
\|v^h_\Phi\|_{{L}_t^1(\dot{B}_{p,1}^{\frac{n-1}{p}+1,\frac{1}{2}})}\\
& \ +\frac{1}{\lambda} c_{k,j} 2^{-k(\frac{n-1}{p}+1-s)}2^{-\frac{1}{2}j} \int_0^t \|v^n_\Phi\|_{\dot{B}_{p,1}^{\frac{n-1}{p},\frac{1}{2}}}\|\partial_nv^h_\Phi\|_{\dot{B}_{p,1}^{\frac{n-1}{p}+1-s,\frac{1}{2}}}dt'.
\end{split}
\end{align}
Together with the above estimates on $I_1-I_6$, we get that
\begin{align}
\begin{split}\label{e3.17}
&\|v_\Phi^h\|_{\widetilde{L}^\infty_t(\dot{B}_{p,1}^{\frac{n-1}{p}+1-s,\frac{1}{2}})}
+\|v_\Phi^h\|_{{L}^1_t(\dot{B}_{p,1}^{\frac{n-1}{p}+1,\frac{1}{2}})}+\int_0^t \|v^n_\Phi\|_{\dot{B}_{p,1}^{\frac{n-1}{p},\frac{1}{2}}}\|\partial_nv^h_\Phi\|_{\dot{B}_{p,1}^{\frac{n-1}{p}+1-s,\frac{1}{2}}} d\tau\\
&\lesssim \Psi(0)+\frac{1}{\lambda}\Psi(t)+\Psi(t)^2.
\end{split}
\end{align}

\subsection{Estimates on the vertical component $v^n$}
We begin this part by studying the equation of $v^n$, which is stated as follows
 $$\partial_tv^n+D_\eps^s v^n+{v}\cdot\nabla v^n+\eps^2\partial_nq=0.$$
Observing that in  the above equation, one can expect that there is no loss of derivative in vertical direction. More precisely, due to divergence free condition, the nonlinear term $v^n\partial_nv^n$ can be rewritten as $-v^n\mathrm{div}_h{v}^h$. Thus the estimate on $v^n$ is  different from $v^h.$

 Applying the anisotropic dyadic decomposition operator ${\Delta}_{k,j}$ to the equation of $v^n$,
then in each dyadic block, $v^n$ satisfies
\begin{equation}\label{e3.23}
\partial_t{\Delta}_{k,j}v^n+D_\eps^s{\Delta}_{k,j}v^n
=-{\Delta}_{k,j}(v^h\cdot\nabla_h v^n)+{\Delta}_{k,j}(v^n\mathrm{div}_hv^h)-\eps^2{\Delta}_{k,j}\partial_nq.
\end{equation}
Let us define $G\overset{\mathrm{def}}=v^h\cdot\nabla_h v^n-v^n\mathrm{div}_hv^h$. We write the solution of (\ref{e3.23}) as follows
\begin{align}\label{e3.24}
\begin{split}
{\Delta}_{k,j}{v}^n_{\Phi}&=e^{-tD_\epsilon^s+\Phi(t,D_n)}{\Delta}_{k,j}v^n_0+\int_0^te^{-(t-\tau)D_\epsilon^s}e^{-\lambda D_n \int_\tau^t\dot{\theta}(t')dt'}{\Delta}_{k,j}G_{\Phi}d\tau\\
&\quad+\eps^2\int_0^te^{-(t-\tau)D_\epsilon^s}e^{-\lambda D_n \int_\tau^t\dot{\theta}(t')dt'}{\Delta}_{k,j}\partial_nq_{\Phi}d\tau.
\end{split}
\end{align}
Taking the $L^p_h(L^2_v)$ norm, we infer that
\begin{align}\label{e3.25}
\begin{split}
\|{\Delta}_{k,j}{v}^n_{\Phi}\|_{L^p_h(L^2_v)}&\lesssim e^{-c2^{ks}t}\|{\Delta}_{k,j} e^{\alpha  D_n }v^n_0\|_{L^p_h(L^2_v)}\\
&\quad+\int_0^te^{-c2^{ks}(t-\tau)}\|{\Delta}_{k,j}G_{\Phi}\|_{L^p_h(L^2_v)}d\tau\\
&\quad+\eps^2\int_0^te^{-c2^{ks}(t-\tau)}\|{\Delta}_{k,j}\partial_nq_{\Phi}\|_{L^p_h(L^2_v)}d\tau.
\end{split}
\end{align}
By the Young's inequality, we deduce that
\begin{align}\label{e3.26}
\begin{split}
\|{\Delta}_{k,j}{v}^n_{\Phi}\|_{L_t^\infty(L^p_h(L^2_v))}&+2^{ks}
\|{\Delta}_{k,j}{v}^n_{\Phi}\|_{L_t^1(L^p_h(L^2_v))}\\
&\lesssim\|{\Delta}_{k,j} e^{\alpha  D_n }v^n_0\|_{L^p_h(L^2_v)}
+\|{\Delta}_{k,j}G_{\Phi}\|_{L^1_t(L^p_h(L^2_v))}\\
&\quad+\eps^2\|{\Delta}_{k,j}\partial_nq_{\Phi}\|_{L^1_t(L^p_h(L^2_v))}.
\end{split}
\end{align}
Multiplying both sides of (\ref{e3.26}) by $2^{k(\frac{n-1}{p}+1-s)}2^{j\frac{1}{2}}$ and taking the sum over $k,j$, we have
\begin{align}\label{e3.27}
\begin{split}
&\|{v}^n_{\Phi}\|_{\widetilde{L}_t^\infty(\dot{B}^{\frac{n-1}{p}+1-s,\frac{1}{2}}_{p,1})}+
\|{v}^n_{\Phi}\|_{L_t^1(\dot{B}^{\frac{n-1}{p}+1,\frac{1}{2}}_{p,1})}\\
&\lesssim\|e^{\alpha  D_n }v^n_0\|_{\dot{B}^{\frac{n-1}{p}+1-s,\frac{1}{2}}_{p,1}}
+\|G_{\Phi}\|_{L^1_t(\dot{B}^{\frac{n-1}{p}+1-s,\frac{1}{2}}_{p,1})}
+\eps^2\|\partial_nq_{\Phi}\|_{L^1_t(\dot{B}^{\frac{n-1}{p}+1-s,\frac{1}{2}}_{p,1})}.
\end{split}
\end{align}
According to Lemma \ref{p2.9}, we can obtain the estimates of nonlinear term by the following:
$$\|(v^h\cdot\nabla_h v^n)_{\Phi}\|_{L^1_t(\dot{B}^{\frac{n-1}{p}+1-s,\frac{1}{2}}_{p,1})}\lesssim \|{v}^h_{\Phi}\|_{\widetilde{L}^\infty_t(\dot{B}^{\frac{n-1}{p}+1-s,\frac{1}{2}}_{p,1})}
\|{v}^n_{\Phi}\|_{L^1_t(\dot{B}^{\frac{n-1}{p}+1,\frac{1}{2}}_{p,1})},$$
$$\|(v^n\mathrm{div}_h v^h)_{\Phi}\|_{L^1_t(\dot{B}^{\frac{n-1}{p}+1-s,\frac{1}{2}}_{p,1})}\lesssim \|{v}^n_{\Phi}\|_{\widetilde{L}^\infty_t(\dot{B}^{\frac{n-1}{p}+1-s,\frac{1}{2}}_{p,1})}
\|{v}^h_{\Phi}\|_{L^1_t(\dot{B}^{\frac{n-1}{p}+1,\frac{1}{2}}_{p,1})}.$$
This implies that
$$\|G_{\Phi}\|_{L^1_t(\dot{B}^{\frac{n-1}{p}+1-s,\frac {1}{2}}_{p,1})}\lesssim \|{v}_{\Phi}\|_{\widetilde{L}^\infty_t(\dot{B}^{\frac{n-1}{p}+1-s,\frac {1}{2}}_{p,1})}
\|{v}_{\Phi}\|_{L^1_t(\dot{B}^{\frac{n-1}{p}+1,\frac {1}{2}}_{p,1})}.$$

While for the pressure term, we use the decomposition $q=q^1+q^2+q^3$ in (\ref{e3.2}). For $q^1$, since $\eps(-\Delta_\epsilon)^{-1}\partial_i\partial_n$ is a  bounded operator applied for frequency localized
functions in  $L^p_h(L^2_v)$ if $i =1,2,\cdots,n-1$, we have
$$\aligned
\eps^2\|\partial_nq^1_{\Phi}\|_{L^1_t(\dot{B}^{\frac{n-1}{p}+1-s,\frac {1}{2}}_{p,1})}
&=\eps^2\|(-\Delta_\epsilon)^{-1}\partial_i\partial_j\partial_n
(v^iv^j)_{\Phi}\|_{L^1_t(\dot{B}^{\frac{n-1}{p}+1-s,\frac {1}{2}}_{p,1})}\\
&\lesssim \eps\|\nabla_h(v^hv^h)_{\Phi}\|_{L^1_t(\dot{B}^{\frac{n-1}{p}+1-s,\frac {1}{2}}_{p,1})}.
\endaligned$$
Therefore, we get by using Lemma \ref{p2.9} that
$$\eps^2\|\partial_nq^1_{\Phi}\|_{L^1_t(\dot{B}^{\frac{n-1}{p}+1-s,\frac {1}{2}}_{p,1})}\lesssim \eps\|{v}^h_{\Phi}\|_{\widetilde{L}^\infty_t(\dot{B}^{\frac{n-1}{p}+1-s,\frac{1}{2}}_{p,1})}
\|{v}^h_{\Phi}\|_{L^1_t(\dot{B}^{\frac{n-1}{p}+1,\frac {1}{2}}_{p,1})}.$$
Similarly, the fact that $\eps^2(-\Delta_\epsilon)^{-1}\partial^2_n$ is a bounded operator applied for frequency localized
functions in $L^p_h(L^2_v)$ implies
$$
\eps^2\|\partial_nq^2_{\Phi}\|_{L^1_t(\dot{B}^{\frac{n-1}{p}+1-s,\frac {1}{2}}_{p,1})}\lesssim \|\nabla_h(v^nv^h)_{\Phi}\|_{L^1_t(\dot{B}^{\frac{n-1}{p}+1-s,\frac {1}{2}}_{p,1})},
$$
$$
\eps^2\|\partial_nq^3_{\Phi}\|_{L^1_t(\dot{B}^{\frac{n-1}{p}+1-s,\frac {1}{2}}_{p,1})}\lesssim \|(v^n\mathrm{div}_hv^h)_{\Phi}\|_{L^1_t(\dot{B}^{\frac{n-1}{p}+1-s,\frac {1}{2}}_{p,1})}.
$$
Thus, we have
$$\aligned
\eps^2\|\partial_nq^2_{\Phi}\|_{L^1_t(\dot{B}^{\frac{n-1}{p}+1-s,\frac {1}{2}}_{p,1})}
+\eps^2\|\partial_nq^3_{\Phi}\|_{L^1_t(\dot{B}^{\frac{n-1}{p}+1-s,\frac {1}{2}}_{p,1})}\lesssim \|{v}_{\Phi}\|_{\widetilde{L}^\infty_t(\dot{B}^{\frac{n-1}{p}+1-s,\frac {1}{2}}_{p,1})}
\|{v}_{\Phi}\|_{L^1_t(\dot{B}^{\frac{n-1}{p}+1,\frac {1}{2}}_{p,1})}.
\endaligned$$
Then we obtain that
$$\aligned
\eps^2\|\partial_nq_{\Phi}\|_{L^1_t(\dot{B}^{\frac{n-1}{p}+1-s,\frac {1}{2}}_{p,1})}\lesssim \|{v}_{\Phi}\|_{\widetilde{L}^\infty_t(\dot{B}^{\frac{n-1}{p}+1-s,\frac {1}{2}}_{p,1})}
\|{v}_{\Phi}\|_{L^1_t(\dot{B}^{\frac{n-1}{p}+1,\frac {1}{2}}_{p,1})}.
\endaligned$$
 Combining the above estimates,  we can get the bound of ${v}^n_{\Phi}$  as follows:
\begin{align}\label{e3.28}
\begin{split}
\|{v}^n_{\Phi}\|_{\widetilde{L}_t^\infty(\dot{B}^{\frac{n-1}{p}+1-s,\frac {1}{2}}_{p,1})}+
\|{v}^n_{\Phi}\|_{L_t^1(\dot{B}^{\frac{n-1}{p}+1,\frac {1}{2}}_{p,1})}
\lesssim  \Psi(0) + \Psi(t)^2.
\end{split}
\end{align}

Together (\ref{e3.28}) with (\ref{e3.17}), we finally get that
\begin{align}\label{e3.35}
\begin{split}
\Psi(t) \lesssim \Psi(0)+\frac{1}{\lambda}\Psi(t)+\Psi(t)^2.
\end{split}
\end{align}
This completes the proof of Lemma \ref{l3.2} by choosing $\lambda$ large enough.

\section{Estimates for $\theta(t)$ }
 In the above section, we have used the fact that $\Phi(t)$ is a subadditivity function. This means we should ensure that $\theta(t)< \frac{\alpha}{\lambda}$. Thus,
 it is sufficient to prove that for any time $t$, $\theta(t)$ is a small quantity. By the definition of $\theta(t)$, naturally, we assume that $e^{\alpha D_n }v^n_0$ belongs to $\dot{B}_{p,1}^{\frac{n-1}{p}-s,\frac{1}{2}}$. According to the property of the operator $\partial_t + D_\epsilon ^s$, then we can get the bound for ${v}^n_{\Phi}$ in ${L}_t^1(\dot{B}_{p,1}^{\frac{n-1}{p},\frac{1}{2}})$.
  However, we can not enclose the estimate for ${v}^n_{\Phi}$ in $\widetilde{L}_t^\infty(\dot{B}_{p,1}^{\frac{n-1}{p}-s,\frac{1}{2}})\cap{L}_t^1(\dot{B}_{p,1}^{\frac{n-1}{p},\frac{1}{2}})$.
  Our observation is to add
an extra term $\epsilon v^h$ under the same norm which is hidden in the pressure term $\eps^2\partial_nq^1.$
Hence,  we first
denote that $$X_0=\eps\|e^{\alpha D_n }v^h_0\|_{\dot{B}_{p,1}^{\frac{n-1}{p}-s,\frac{1}{2}}},$$
$$Y_0=\|e^{\alpha D_n }v^n_0\|_{\dot{B}_{p,1}^{\frac{n-1}{p}-s,\frac{1}{2}}},$$
$$X(t)=\eps\|{v}_\Phi^h\|_{\widetilde{L}_t^\infty(\dot{B}_{p,1}^{\frac{n-1}{p}-s,\frac{1}{2}})}+\eps\|{v}_\Phi^h\|_{{L}_t^1(\dot{B}_{p,1}^{\frac{n-1}{p},\frac{1}{2}})},$$
$$Y(t)=\|{v}^n_{\Phi}\|_{\widetilde{L}_t^\infty(\dot{B}_{p,1}^{\frac{n-1}{p}-s,\frac{1}{2}})}
+\|{v}^n_{\Phi}\|_{{L}_t^1(\dot{B}_{p,1}^{\frac{n-1}{p},\frac{1}{2}})}.$$
In order to get the  desired estimates, it suffices to prove the following lemma.
\begin{lem}\label{l3.1}
There exists a constant $C_2$ such that for any $\lambda>0$ and $t$ satisfying $\theta(t)\leq \frac{\alpha}{2\lambda}$, we have
$$X(t)+Y(t)\leq C_2(X_0+Y_0)+C_2(X(t)+Y(t))\Psi(t).$$
\end{lem}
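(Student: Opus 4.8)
The plan is to run, at the regularity level $\dot{B}_{p,1}^{\frac{n-1}{p}-s,\frac{1}{2}}$ (one derivative below the level used for $\Psi$), the same Duhamel-type argument as in Section~3, simultaneously for $v^n$ (which produces $Y$) and for the rescaled field $\eps v^h$ (which produces $X$). For $Y(t)$ I start from \eqref{e3.24}, take the $L^p_h(L^2_v)$ norm of the dyadic block ${\Delta}_{k,j}v^n_\Phi$, use Young's inequality in time with the smoothing factor $e^{-c2^{ks}t}$, then multiply by $2^{k(\frac{n-1}{p}-s)}2^{\frac{j}{2}}$ and sum over $(k,j)$ in $\ell^1(\mathbb{Z}^2)$; this gives
$$Y(t)\lesssim Y_0+\|G_\Phi\|_{L^1_t(\dot{B}_{p,1}^{\frac{n-1}{p}-s,\frac{1}{2}})}+\eps^2\|\partial_nq_\Phi\|_{L^1_t(\dot{B}_{p,1}^{\frac{n-1}{p}-s,\frac{1}{2}})},\qquad G=v^h\cdot\nabla_hv^n-v^n\mathrm{div}_hv^h.$$
Multiplying \eqref{e3.6} by $\eps$ and repeating these steps yields
$$X(t)\lesssim X_0+\eps\|(v\cdot\nabla v^h)_\Phi\|_{L^1_t(\dot{B}_{p,1}^{\frac{n-1}{p}-s,\frac{1}{2}})}+\eps\|\nabla_hq_\Phi\|_{L^1_t(\dot{B}_{p,1}^{\frac{n-1}{p}-s,\frac{1}{2}})}.$$
So the task reduces to bounding the nonlinear and pressure terms in $L^1_t(\dot{B}_{p,1}^{\frac{n-1}{p}-s,\frac{1}{2}})$ by $(X(t)+Y(t))\Psi(t)$.

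For the bilinear terms that contain no vertical derivative --- $v^h\cdot\nabla_hv^h$ and the two pieces of $G$ --- I apply the weighted product laws Lemma~\ref{p2.8} and \ref{p2.9}, placing in each product the factor carrying $\frac{n-1}{p}-s$ derivatives in $\widetilde{L}^\infty_t$ (this is precisely an $X$- or $Y$-factor) and the other factor at the top regularity $\frac{n-1}{p}+1$ (or $\frac{n-1}{p}+1-s$) in $L^1_t$ (resp. $\widetilde{L}^\infty_t$), which is a $\Psi$-factor. A direct check shows that the index conditions $\sigma_1,\sigma_2\le\frac{n-1}{p}$ and $\sigma_1+\sigma_2>(n-1)\max(0,\frac{2}{p}-1)$ required by those lemmas hold exactly because $1\le s<\min(n-1,\frac{2(n-1)}{p})$. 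For the pressure I use the splitting \eqref{e3.2} together with the structural fact that the Fourier multipliers $\eps(-\Delta_\eps)^{-1}\partial_i\partial_n$ ($1\le i\le n-1$) and $\eps^2(-\Delta_\eps)^{-1}\partial_n^2$ are bounded on frequency-localized functions in $L^p_h(L^2_v)$, since $2\eps|\xi_n||\xi_h|\le|\xi_h|^2+\eps^2|\xi_n|^2$ and $\eps^2|\xi_n|^2\le|\xi_h|^2+\eps^2|\xi_n|^2$. Consequently $\eps^2\partial_nq^1$ is dominated by $\eps\|\nabla_h(v^hv^h)_\Phi\|$ --- which is exactly why the quantity $X(t)$, i.e. $\eps v^h$ at this regularity, has to be carried along: the vertical pressure in the $v^n$-equation sees $\eps v^h$, not just $v^n$ --- while $\eps^2\partial_nq^2$ and $\eps^2\partial_nq^3$ reduce to $\|\nabla_h(v^nv^h)_\Phi\|$ and $\|(v^n\mathrm{div}_hv^h)_\Phi\|$; the same reductions handle $\eps\nabla_hq$ in the $X$-equation. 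All these pieces close as $(X(t)+Y(t))\Psi(t)$ via the product laws.

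The only genuinely delicate contribution is the one carrying a true vertical derivative, $v^n\partial_nv^h$, together with the leftover pieces of $\nabla_hq^2$ and $\nabla_hq^3$ after the multiplier reductions. These I treat exactly as in Section~3, exploiting the analytic weight $e^{\Phi}$: using $\mathrm{div}\,v=0$ one writes $v^n\partial_nv^h=\partial_n(v^nv^h)+v^h\mathrm{div}_hv^h$ (the last term being harmless and already of a handled type), and after moving $\partial_n$ onto the propagator each vertical block $j$ picks up a factor $2^j$, which is absorbed through
$$2^j\dot\theta(\tau)\,e^{-c\lambda2^j\int_\tau^t\dot\theta(t')dt'}=\frac{1}{c\lambda}\,\partial_\tau e^{-c\lambda2^j\int_\tau^t\dot\theta(t')dt'}$$
once one identifies $\|v^n_\Phi(\tau)\|_{\dot{B}_{p,1}^{\frac{n-1}{p},\frac{1}{2}}}=\dot\theta(\tau)$; integrating in $\tau$ then recovers the lost derivative and bounds this term by $(X(t)+Y(t))\Psi(t)$ as well. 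Here the hypothesis $\theta(t)\le\frac{\alpha}{2\lambda}$ enters only to guarantee that $\Phi$ is subadditive --- which is what makes all the weighted product laws above legitimate --- and, in contrast with Lemma~\ref{l3.2}, no largeness of $\lambda$ is needed at this stage. Adding the two displays and collecting the bounds gives $X(t)+Y(t)\le C_2(X_0+Y_0)+C_2(X(t)+Y(t))\Psi(t)$. I expect the main obstacle to be the bookkeeping around this derivative-loss term and its interaction with the anisotropic pressure $\eps^2\partial_nq$: one must check that every lost $\partial_n$ is compensated either by the analytic weight or by a horizontal-derivative gain from $(-\Delta_\eps)^{-1}$, and that the Chemin-Lerner summations over $(k,j)$ close with the correct $\ell^1(\mathbb{Z}^2)$ norms, the time-dependent coefficients being the familiar technical nuisance.
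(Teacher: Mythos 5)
Your overall framework---running the Duhamel argument for $v^n$ and $\eps v^h$ at the level $\dot{B}^{\frac{n-1}{p}-s,\frac{1}{2}}_{p,1}$, estimating $G$ and the pressure pieces by the weighted product laws, and explaining the need for the $\eps v^h$ quantity via $\eps^2\partial_n q^1$---matches the paper. But the treatment of the single genuinely dangerous term $\partial_n(v^n v^h)$ in the $v^h$-equation is not the paper's, and the mechanism you propose does not actually close.

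The analytic-weight trick you invoke, namely
$2^j\dot\theta(\tau)\,e^{-c\lambda 2^j\int_\tau^t\dot\theta}=\frac{1}{c\lambda}\partial_\tau e^{-c\lambda 2^j\int_\tau^t\dot\theta}$,
requires that the Duhamel kernel be integrated against an \emph{external} weight $\dot\theta(\tau)\,d\tau$. In Lemma~\ref{l3.2} this weight is literally built into $\Psi$, which contains $\int_0^t\dot\theta(\tau)\|\partial_n v^h_\Phi(\tau)\|\,d\tau$; one multiplies the $L^p_h(L^2_v)$ estimate for $\Delta_{k,j}\partial_n v^h_\Phi(\tau)$ by $\dot\theta(\tau)$, integrates, and applies Fubini ($I_4$--$I_6$). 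But $X(t)$ and $Y(t)$ are plain $\widetilde L^\infty_t$ and $L^1_t$ norms of $\eps v^h_\Phi$, $v^n_\Phi$ with no such weighting, so there is no $\dot\theta(\tau)\,d\tau$ against which to integrate. Trying to conjure the $\dot\theta$ from inside the bilinear estimate (using $\|v^n_\Phi(\tau)\|_{\dot{B}^{\frac{n-1}{p},\frac{1}{2}}_{p,1}}=\dot\theta(\tau)$, as you suggest) fails: after the product law the factor $\dot\theta(\tau)$ sits inside a $\tau$-integral whose other factor is the $\tau$-dependent $\ell^1$-summable sequence $c_{k,j}(\tau)\|v^h_\Phi(\tau)\|$, and the Chemin--Lerner sums over $(k,j)$ do not commute with the $\sup_\tau$ you would need; this is precisely what the $\Psi$-weighting in Section~3 is designed to avoid. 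Your incidental observation that ``no largeness of $\lambda$ is needed'' is correct but for the wrong reason---the paper does not use the analyticity at all in Lemma~\ref{l3.1}.

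What the paper actually does is absorb the vertical derivative $\partial_n$ using the $\eps$-dependent anisotropic dissipation: the propagator is $e^{-c(2^{ks}+\eps^s 2^{js})(t-\tau)}$, so Young's inequality in time with exponent $s$ gives
$\|e^{-c\eps^s 2^{js}\cdot}\|_{L^s_t}\sim \eps^{-1}2^{-j}$,
which exactly cancels the $2^j$ from $\partial_n$ at the price of $\frac{1}{\eps}$, producing the terms $\frac{1}{\eps}\|\Delta_{k,j}(v^n v^h)_\Phi\|_{L^{\frac{s}{s-1}}_t}$ and $\frac{1}{\eps}2^k\|\Delta_{k,j}(v^n v^h)_\Phi\|_{L^1_t}$ in \eqref{e3.45}. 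The $\frac{1}{\eps}$ is then cancelled by the $\eps$ in the definition of $X$, and the unusual $\widetilde L^{\frac{s}{s-1}}_t(\dot{B}^{\frac{n-1}{p}-s,\frac{1}{2}}_{p,1})$ norm of $(v^n v^h)_\Phi$ is controlled by placing $v^n_\Phi$ in the interpolation space $\widetilde L^{\frac{s}{s-1}}_t(\dot{B}^{\frac{n-1}{p}-1,\frac{1}{2}}_{p,1})$ between the two $Y$-norms. Without this $\eps$-diffusion mechanism your estimate of $X(t)$ does not close; with it, the estimate holds for every $\lambda>0$ with no smallness or largeness of $\lambda$ required, exactly as stated. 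The same applies to the leftover vertical-derivative pieces of $\nabla_h q^2,\nabla_h q^3$, where $(-\Delta_\eps)^{-1}\nabla_h\partial_n$ is the relevant $\eps^{-1}$-bounded multiplier.
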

\begin{proof}
We apply the same method as in the above section to prove $Y(t).$ Indeed,
multiplying both sides of (\ref{e3.26}) by $2^{(\frac{n-1}{p}-s)k}2^{\frac{1}{2}j}$ and taking the sum over $k,j$, we can get that
\begin{align}\label{e3.37}
\begin{split}
\|{v}^n_{\Phi}\|_{\widetilde{L}_t^\infty(\dot{B}^{\frac{n-1}{p}-s,\frac{1}{2}}_{p,1})}+
\|{v}^n_{\Phi}\|_{{L}_t^1(\dot{B}^{\frac{n-1}{p},\frac{1}{2}}_{p,1})}&\leq\|e^{\alpha D_n }v^n_0\|_{\dot{B}^{\frac{n-1}{p}-s,\frac{1}{2}}_{p,1}}\\
&\quad+\|G_{\Phi}\|_{{L}^1_t(\dot{B}^{\frac{n-1}{p}-s,\frac{1}{2}}_{p,1})}
+\eps^2\|\partial_nq_{\Phi}\|_{{L}^1_t(\dot{B}^{\frac{n-1}{p}-s,\frac{1}{2}}_{p,1})}.
\end{split}
\end{align}
According to Lemma \ref{p2.9}, we can obtain the estimates of nonlinear terms that
$$\aligned\|\mathrm{div}_h(v^n{v}^h)_{\Phi}\|_{{L}^1_t(\dot{B}^{\frac{n-1}{p}-s,\frac{1}{2}}_{p,1})}
&+\|(v^n\mathrm{div}_h{v}^h)_{\Phi}\|_{L^1_t(\dot{B}^{\frac{n-1}{p}-s,\frac{1}{2}}_{p,1})}\\
&\lesssim  \|{v}^n_{\Phi}\|_{{L}^1_t(\dot{B}^{\frac{n-1}{p},\frac{1}{2}}_{p,1})}
\|{v}^h_{\Phi}\|_{\widetilde{L}^\infty_t(\dot{B}^{\frac{n-1}{p}+1-s,\frac{1}{2}}_{p,1})}.
\endaligned$$
This implies that
$$\|G_{\Phi}\|_{L^1_t(\dot{B}^{\frac{n-1}{p}-s,\frac{1}{2}}_{p,1})}\lesssim  Y(t)\Psi(t).$$

While for the pressure term, we use the decomposition $q=q^1+q^2+q^3$ in (\ref{e3.2}). For $q_1$, since $\eps(-\Delta_\eps)^{-1}\partial_i\partial_n$ is a  bounded operator applied for frequency localized
functions in  $L^p_h(L^2_v)$ if $i =1,2,\cdots,n-1$, we have
$$\aligned
\eps^2\|\partial_nq^1_{\Phi}\|_{{L}^1_t(\dot{B}^{\frac{n-1}{p}-s,\frac{1}{2}}_{p,1})}
&=\eps^2\|(-\Delta_\eps)^{-1}\partial_i\partial_j\partial_n
(v^iv^j)_{\Phi}\|_{{L}^1_t(\dot{B}^{\frac{n-1}{p}-s,\frac{1}{2}}_{p,1})}\\
&\lesssim \eps\|\nabla_h(v^hv^h)_{\Phi}\|_{{L}^1_t(\dot{B}^{\frac{n-1}{p}-s,\frac{1}{2}}_{p,1})}\\
&\lesssim \eps\|{v}^h_{\Phi}\|_{{L}^1_t(\dot{B}^{\frac{n-1}{p},\frac{1}{2}}_{p,1})}
\|{v}^h_{\Phi}\|_{\widetilde{L}^\infty_t(\dot{B}^{\frac{n-1}{p}+1-s,\frac{1}{2}}_{p,1})}.
\endaligned$$
Similarly, the fact that $\eps^2(-\Delta_\eps)^{-1}\partial^2_n$ is a bounded operator applied for frequency localized
functions in  $L^p_h(L^2_v)$ implies
$$
\eps^2\|\partial_nq^2_{\Phi}\|_{{L}^1_t(\dot{B}^{\frac{n-1}{p}-s,\frac{1}{2}}_{p,1})}\lesssim \|\nabla_h(v^nv^h)_{\Phi}\|_{{L}^1_t(\dot{B}^{\frac{n-1}{p}-s,\frac{1}{2}}_{p,1})},
$$
$$
\eps^2\|\partial_nq^3_{\Phi}\|_{{L}^1_t(\dot{B}^{\frac{n-1}{p}-s,\frac{1}{2}}_{p,1})}\lesssim \|(v^n\mathrm{div}_hv^h)_{\Phi}\|_{{L}^1_t(\dot{B}^{\frac{n-1}{p}-s,\frac{1}{2}}_{p,1})}.
$$
Thus, applying  Lemma \ref{p2.9}, we have
$$\aligned
\eps^2\|\partial_nq^2_{\Phi}\|_{L^1_t(\dot{B}^{\frac{n-1}{p}-s,\frac{1}{2}}_{p,1})}
+\eps^2\|\partial_nq^3_{\Phi}\|_{L^1_t(\dot{B}^{\frac{n-1}{p}-s,\frac{1}{2}}_{p,1})}
\lesssim \|{v}^n_{\Phi}\|_{{L}^1_t(\dot{B}^{\frac{n-1}{p},\frac{1}{2}}_{p,1})}
\|{v}^h_{\Phi}\|_{\widetilde{L}^\infty_t(\dot{B}^{\frac{n-1}{p}+1-s,\frac{1}{2}}_{p,1})}.
\endaligned$$
Then we obtain that
$$\aligned
\eps^2\|\partial_nq_{\Phi}\|_{{L}^1_t(\dot{B}^{\frac{n-1}{p},\frac{1}{2}}_{p,1})}\lesssim Y(t)\Psi(t) +X(t)\Psi(t).
\endaligned$$
Combining all the above estimates,  we can get the bound of ${v}^n_{\Phi}$ in $\widetilde{L}^\infty_t(\dot{B}^{\frac{n-1}{p}-s,\frac{1}{2}}_{p,1})\cap L^1_t(\dot{B}^{\frac{n-1}{p},\frac{1}{2}}_{p,1})$ by the following:
\begin{align}\label{e3.38}
\begin{split}
\|{v}^n_{\Phi}\|_{\widetilde{L}_t^\infty(\dot{B}^{\frac{n-1}{p}-s,\frac{1}{2}}_{p,1})}+
\|{v}^n_{\Phi}\|_{L_t^1(\dot{B}^{\frac{n-1}{p},\frac{1}{2}}_{p,1})}
\lesssim Y_0
 +(X(t)+Y(t))\Psi(t).
\end{split}
\end{align}
This completes the proof of $Y(t)$ in Lemma \ref{l3.1}.

The following is devoted to getting the estimate of $X(t).$
The horizontal component ${v}^h$ in each dyadic block satisfies
\begin{equation}\label{e3.43}
\partial_t{\Delta}_{k,j}{v}^h+D_\eps^s{\Delta}_{k,j}{v}^h
=-{\Delta}_{k,j}\mathrm{div}_h({v}^h\otimes{v}^h)-{\Delta}_{k,j}\partial_n(v^n{v}^h)-\nabla_h{\Delta}_{k,j}q.
\end{equation}
Denote $F\overset{\mathrm{def}}=-\mathrm{div}_h({v}^h\otimes{v}^h)-\partial_n(v^n{v}^h)$, then we infer that
\begin{align}\label{e3.44}
\begin{split}
\|{\Delta}_{k,j}{v}_\Phi^h\|_{L^p_h(L^2_v)}&\lesssim e^{-c(2^{ks}+\eps^s2^{js})t}\|{\Delta}_{k,j} e^{\alpha D_n }v^h_0\|_{L^p_h(L^2_v)}\\
&\quad+\int_0^te^{-c(2^{ks}+\eps^s2^{js})(t-\tau)}\|{\Delta}_{k,j}F_{\Phi}\|_{L^p_h(L^2_v)}d\tau\\
&\quad+\int_0^te^{-c(2^{ks}+\eps^s2^{js})(t-\tau)}\|{\Delta}_{k,j}\nabla_hq_{\Phi}\|_{L^p_h(L^2_v)}d\tau.
\end{split}
\end{align}
By Young's inequality, it holds that
$$\aligned
\|\int_0^te^{-c(2^{ks}+\eps^s2^{js})(t-\tau)}\|{\Delta}_{k,j}
\partial_n(v^nv^h)_{\Phi}\|_{L^p_h(L^2_v)}d\tau\|_{L^\infty_t}\lesssim \frac{1}{\eps}\|{\Delta}_{k,j}(v^nv^h)_{\Phi}\|_{L^\frac{s}{s-1}_t(L^p_h(L^2_v))},
\endaligned$$
$$\aligned
2^{ks}\|\int_0^te^{-c(2^{ks}+\eps^s2^{js})(t-\tau)}\|{\Delta}_{k,j}
\partial_n(v^nv^h)_{\Phi}\|_{L^p_h(L^2_v)}d\tau\|_{L^1_t}\lesssim \frac{1}{\eps}2^k\|{\Delta}_{k,j}(v^nv^h)_{\Phi}\|_{L^1_t(L^p_h(L^2_v))}.
\endaligned$$
Here and in what follows, if $s=1$, the quantity $\|{\Delta}_{k,j}(v^nv^h)_{\Phi}\|_{L^\frac{s}{s-1}_t(L^p_h(L^2_v))}$ should be regarded as $\|{\Delta}_{k,j}(v^nv^h)_{\Phi}\|_{L^\infty_t(L^p_h(L^2_v))}.$
Therefore, taking $L^\infty$ norm and $L^1$ norm  on $[0,t]$,  we deduce  that
\begin{align}\label{e3.45}
\begin{split}
\|{\Delta}_{k,j}&{v}_\Phi^h\|_{L_t^\infty(L^p_h(L^2_v))}+2^{ks}
\|{\Delta}_{k,j}{v}_\Phi^h\|_{L_t^1(L^p_h(L^2_v))}\\
&\lesssim \|e^{\alpha  D_n }{\Delta}_{k,j}v^h_0\|_{L^p_h(L^2_v)}+2^k\|{\Delta}_{k,j}(v^h\otimes v^h)_{\Phi}\|_{L^1_t(L^p_h(L^2_v))}\\
&\quad+\frac{1}{\eps}2^k\|{\Delta}_{k,j}(v^n v^h)_{\Phi}\|_{L^1_t(L^p_h(L^2_v))}+\frac{1}{\eps}\|{\Delta}_{k,j}(v^n v^h)_{\Phi}\|_{L^\frac{s}{s-1}_t(L^p_h(L^2_v))}\\
&\quad+\|{\Delta}_{k,j}\nabla_hq_{\Phi}\|_{L^1_t(L^p_h(L^2_v))}.
\end{split}
\end{align}
Multiplying both sides of (\ref{e3.45}) by $2^{(\frac{n-1}{p}-s)k}2^{\frac{1}{2}j}$ and taking the sum over $k,j$, we finally get
\begin{align}\label{e3.46}
\begin{split}
\eps\|{v}_\Phi^h\|_{\widetilde{L}_t^\infty(\dot{B}^{\frac{n-1}{p}-s,\frac{1}{2}}_{p,1})}&+
\eps\|{v}_\Phi^h\|_{{L}_t^1(\dot{B}^{\frac{n-1}{p},\frac{1}{2}}_{p,1})}\\
&\lesssim\eps\|e^{\alpha D_n }v^h_0\|_{\dot{B}^{\frac{n-1}{p}-s,\frac{1}{2}}_{p,1}}
+\eps\|(v^h\otimes v^h)_{\Phi}\|_{{L}^1_t(\dot{B}^{\frac{n-1}{p}+1-s,\frac{1}{2}}_{p,1})}\\
&\quad+\|(v^n v^h)_{\Phi}\|_{\widetilde{L}^\frac{s}{s-1}_t(\dot{B}^{\frac{n-1}{p}-s,\frac{1}{2}}_{p,1})}+\|(v^n v^h)_{\Phi}\|_{{L}^1_t(\dot{B}^{\frac{n-1}{p}+1-s,\frac{1}{2}}_{p,1})}\\
&\quad+\eps\|\nabla_hq_{\Phi}\|_{{L}^1_t(\dot{B}^{\frac{n-1}{p}-s,\frac{1}{2}}_{p,1})}.
\end{split}
\end{align}

For the pressure term $q=q^1+q^2+q^3$, we find that
$$\aligned
\|\nabla_hq^1_{\Phi}\|_{L^1_t(\dot{B}^{\frac{n-1}{p}-s,\frac{1}{2}}_{p,1})}&=\|(-\Delta_\eps)^{-1}\nabla_h\partial_i\partial_j
(v^iv^j)_{\Phi}\|_{{L}^1_t(\dot{B}^{\frac{n-1}{p}-s,\frac{1}{2}}_{p,1})}\lesssim \|\nabla_h(v^hv^h)_{\Phi}\|_{{L}^1_t(\dot{B}^{\frac{n-1}{p}-s,\frac{1}{2}}_{p,1})},
\endaligned$$
where we have used that $(-\Delta_\eps)^{-1}\partial_i\partial_j$ is a bounded operator for frequency localized
functions in  $L^p_h(L^2_v)$.
Similarly,
\begin{eqnarray}\nonumber
\|\nabla_hq^2_{\Phi}\|_{L^1_t(\dot{B}^{\frac{n-1}{p}-s,\frac{1}{2}}_{p,1})} &=& 2\|(-\Delta_\eps)^{-1}\nabla_h\partial_i\partial_n
(v^iv^n)_{\Phi}\|_{{L}^1_t(\dot{B}^{\frac{n-1}{p}-s,\frac{1}{2}}_{p,1})}\\\nonumber &\lesssim& \frac{1}{\eps}\|\nabla_h(v^nv^h)_{\Phi}\|_{{L}^1_t(\dot{B}^{\frac{n-1}{p}-s,\frac{1}{2}}_{p,1})},
\end{eqnarray}
\begin{eqnarray}\nonumber
\|\nabla_hq^3_{\Phi}\|_{L^1_t(\dot{B}^{\frac{n-1}{p}-s,\frac{1}{2}}_{p,1})} &=& 2\|(-\Delta_\eps)^{-1}\nabla_h\partial_n
(v^n\mathrm{div}_hv^h)_{\Phi}\|_{{L}^1_t(\dot{B}^{\frac{n-1}{p}-s,\frac{1}{2}}_{p,1})}\\\nonumber &\lesssim& \frac{1}{\eps}\|(v^n\mathrm{div}_hv^h)_{\Phi}\|_{{L}^1_t(\dot{B}^{\frac{n-1}{p}-s,\frac{1}{2}}_{p,1})}.
\end{eqnarray}
According to Lemma \ref{p2.9}, the right hand side of (\ref{e3.46}) can be bounded by following:
$$\eps\|({v}^h\otimes{v}^h)_{\Phi}\|_{{L}^1_t(\dot{B}^{\frac{n-1}{p}+1-s,\frac{1}{2}}_{p,1})}\lesssim \eps\|{v}^h_{\Phi}\|_{{L}^1_t(\dot{B}^{\frac{n-1}{p},\frac{1}{2}}_{p,1})}
\|{v}^h_{\Phi}\|_{\widetilde{L}^\infty_t(\dot{B}^{\frac{n-1}{p}+1-s,\frac{1}{2}}_{p,1})},$$
$$\|(v^n{v}^h)_{\Phi}\|_{{L}^1_t(\dot{B}^{\frac{n-1}{p}+1-s,\frac{1}{2}}_{p,1})}\lesssim \|{v}^h_{\Phi}\|_{\widetilde{L}^\infty_t(\dot{B}^{\frac{n-1}{p}+1-s,\frac{1}{2}}_{p,1})}
\|{v}^n_{\Phi}\|_{{L}^1_t(\dot{B}^{\frac{n-1}{p},\frac{1}{2}}_{p,1})},$$
$$\aligned
\eps\|\nabla_hq_{\Phi}\|_{L^1_t(\dot{B}^{\frac{n-1}{p}-s,\frac{1}{2}}_{p,1})}&\lesssim \Big(\eps\|{v}^h_{\Phi}\|_{{L}^1_t(\dot{B}^{\frac{n-1}{p},\frac{1}{2}}_{p,1})}+\|{v}^n_{\Phi}\|_{{L}^1_t(\dot{B}^{\frac{n-1}{p},\frac{1}{2}}_{p,1})}\Big)
\|{v}^h_{\Phi}\|_{\widetilde{L}^\infty_t(\dot{B}^{\frac{n-1}{p}+1-s,\frac{1}{2}}_{p,1})},\\
\endaligned$$
$$\aligned\|(v^n{v}^h)_{\Phi}\|_{\widetilde{L}^\frac{s}{s-1}_t(\dot{B}^{\frac{n-1}{p}-s,\frac{1}{2}}_{p,1})}&\lesssim \|{v}^n_{\Phi}\|_{\widetilde{L}^\frac{s}{s-1}_t(\dot{B}^{\frac{n-1}{p}-1,\frac{1}{2}}_{p,1})}
\|{v}^h_{\Phi}\|_{\widetilde{L}^\infty_t(\dot{B}^{\frac{n-1}{p}+1-s,\frac{1}{2}}_{p,1})}\\
&\lesssim \Big(\|{v}^n_{\Phi}\|_{\widetilde{L}^\infty_t(\dot{B}^{\frac{n-1}{p}-s,\frac{1}{2}}_{p,1})}+\|{v}^n_{\Phi}\|_{{L}^1_t(\dot{B}^{\frac{n-1}{p},\frac{1}{2}}_{p,1})}\Big)
\|{v}^h_{\Phi}\|_{\widetilde{L}^\infty_t(\dot{B}^{\frac{n-1}{p}+1-s,\frac{1}{2}}_{p,1})}.
\endaligned$$
These imply that
\begin{align}\label{e3.47}
\begin{split}
\eps\|{v}_\Phi^h\|_{\widetilde{L}_t^\infty(\dot{B}^{\frac{n-1}{p}-s,\frac{1}{2}}_{p,1})}&+\eps
\|{v}_\Phi^h\|_{L_t^1(\dot{B}^{\frac{n-1}{p},\frac{1}{2}}_{p,1})}\lesssim X_0 +(X(t)+Y(t))\Psi(t).
\end{split}
\end{align}

Combining (\ref{e3.47}) with (\ref{e3.38}), we finally obtain that there exists a constant $C_2$ such that
\begin{align}\label{e3.50}
\begin{split}
X(t)+Y(t)\leq C_2 (X_0+Y_0) +C_2(X(t)+Y(t))\Psi(t).
\end{split}
\end{align}
This completes the proof of Lemma \ref{l3.1}.
\end{proof}
\section{Proof of the main result}
In this section, we will prove the Theorem \ref{t1.2}. It relies on a continuation argument.
For any $\lambda>\lambda_0$ and  $\eta_1$, we
define $\tau$ by
\begin{equation}\label{e3.51}
\tau=\max\{t\geq0|\ X(t)+Y(t)\leq \eta_1,\quad \ \Psi(t)\leq \eta_1\}.
\end{equation}
In what follows, we shall prove that $\tau=\infty$ under the assumption (\ref{e1.3}) for some small number $\eta_1$. Assume that this is not true.
We choose $\eta_1$ small enough such that
$$\theta(\tau)\leq C Y(\tau)\leq C \eta_1 \leq \frac{\alpha}{2\lambda},\quad (C_1 +C_2)\eta_1\leq\frac{1}{4}.$$
For such fixed $\eta_1$, we select the following  norms of  initial data sufficiently small enough such that
$$
C_1\Psi(0)+C_2(X(0)+Y(0))\leq C\eta \leq \frac{\eta_1}{4}.$$
Hence, we obtain from
 Lemma \ref{l3.2}  and  \ref{l3.1} that
\begin{align}\label{e3.52}
\begin{split}
&\Psi(\tau)\leq C_1\Psi(0)+C_1\eta_1^2,\quad X(\tau)+Y(\tau)\leq C_2(X(0)+Y(0))+C_2\eta_1^2.
\end{split}
\end{align}
This implies that
\begin{align}\label{e3.53}
\begin{split}
X(\tau)+Y(\tau)\leq \frac{\eta_1}{2},\ \ \Psi(\tau)\leq \frac{\eta_1}{2}.
\end{split}
\end{align}
However, this contradicts  (\ref{e3.51}) and hence completes the proof.

\section*{Acknowledgement}
The authors were in part
supported by NSFC (grants No. 11171072, 11421061 and 11222107), Shanghai Talent Development Fund and
SGST 09DZ2272900.

\end{document}